\documentclass[envcountsame]{llncs}

\pagestyle{headings} 

\usepackage{amssymb}
\usepackage{amsmath}
\usepackage{amsfonts}
\usepackage{latexsym}
\usepackage{srcltx}

\usepackage{enumerate}

\def\R{\mathbb{R}}
\def\C{\mathbb{C}}
\def\Z{\mathbb{Z}}
\def\Q{\mathbb{Q}}

\def\F{\mathbb{F}}

\newcommand{\mus}{\mu^\mathrm{sym}}
\newcommand{\Ms}{M^\mathrm{sym}}
\newcommand{\ms}{m^\mathrm{sym}}

\begin{document}

\title{Shimura modular curves and asymptotic symmetric tensor rank of multiplication in any finite field}

\author{St\'ephane Ballet\inst{1} \and Jean Chaumine\inst{2} \and Julia Pieltant\inst{3}}
\institute{Aix-Marseille Universit\'e, CNRS IML FRE 3529  \\ 
Case 930, 13288 Marseille Cedex 9, France\\
\email{stephane.ballet@univ-amu.fr}
\and Universit\'e de la Polyn\'esie Fran\c{c}aise, GAATI EA 3893\\
 B.P. 6570, 98702 Faa'a, Tahiti, France \\
\email{jean.chaumine@upf.pf}
\and INRIA Saclay,
LIX, \'Ecole Polytechnique,
91128 Palaiseau Cedex, France\\
\email{pieltant@lix.polytechnique.fr}}

%

\maketitle

\begin{abstract}
We obtain new asymptotical bounds for the symmetric tensor rank of multiplication in any finite extension of any finite field~$\F_q$. 
In this aim, we use the symmetric Chudnovsky-type generalized algorithm applied on a family of Shimura modular 
curves defined over $\F_{q^2}$ attaining the Drinfeld-Vl\u{a}du\c{t} bound and on the descent of this family over the definition field $\F_q$.
\keywords{Algebraic function field, tower of function fields, tensor rank, algorithm, finite field, modular curve, Shimura curve.}
\end{abstract}

\section{Introduction}

\subsection{General context} 
The determination of the tensor rank of multiplication in finite fields is a problem which has been widely studied over the past decades both for its theoretical and practical importance. Besides it allows one to obtain multiplication algorithms with a low bilinear complexity, which determination is of crucial significance in cryptography, it has also  its own interest in algebraic complexity theory. The pioneer work of D.V. and G.V. Chudnovsky \cite{chch} resulted in  the design of a Karatsuba-like algorithm where the interpolation is done on points of algebraic curves with a sufficient number of rational points over the ground field. Following these footsteps, several improvements and generalizations of this algorithm leading to ever sharper bounds have been proposed since by various authors \cite{baro1,arna1,ceoz,rand3}, and have required to investigate and combine different techniques and objects from algebraic geometry such as evaluations on places of arbitrary degree, generalized evaluations, towers of algebraic function fields\ldots~Furthermore, a lot of connexions with other topics have been made : Shparlinski, Tsfasman and Vl\u{a}du\c{t} \cite{shtsvl} have first developed a correspondence between decompositions of the tensor of multiplication and a family of linear codes with good parameters that they called \textsl{(exact) supercodes}. These codes, renamed \textsl{multiplication friendly codes}, had recently be more extensively studied  and exploited by Cascudo, Cramer, Xing and Yang \cite{cacrxiya} to obtain good asymptotic results on the tensor rank. Moreover they combined their notion of multiplication friendly codes with two newly introduced primitives for function fields over finite fields \cite{cacrxing}, namely the torsion limit and systems of Riemann-Roch equations, to get news results not only  on asymptotic tensor rank but also on linear secret sharing systems and frameproof codes. This stresses that the  tensor rank determination problem has just as many mathematical interests as consequences and applications in various domains of computer science.

\subsection{Tensor rank of multiplication}

Let $q=p^s$ be a prime power, $\F_q$ be the finite field with $q$ elements and $\F_{q^n}$
be the degree $n$ extension of $\F_q$. The multiplication of two elements of $\F_{q^n}$ is 
an \mbox{$\F_q$-bilinear} application from $\F_{q^n} \times \F_{q^n}$ onto $\F_{q^n}$.
Then it can be considered as an \mbox{$\F_q$-linear} application from the tensor product 
${\F_{q^n} \otimes_{\F_q} \F_{q^n}}$
onto $\F_{q^n}$. Consequently it can be also considered as an element 
$T$ of ${(\F_{q^n} \otimes_{\F_q} \F_{q^n})^\star \otimes_{\F_q} \F_{q^n}}$,
namely an element of ${{\F_{q^n}}^\star \otimes_{\F_q} {\F_{q^n}}^\star \otimes_{\F_q} \F_{q^n}}$.
More precisely, when $T$ is written
\begin{equation}\label{tensor}
T=\sum_{i=1}^{r} x_i^\star\otimes y_i^\star\otimes c_i,
\end{equation}
where the $r$ elements $x_i^\star$ and the $r$ elements $y_i^\star$
are in the dual ${\F_{q^n}}^\star$ of $\F_{q^n}$ and the $r$ elements $c_i$ are in $\F_{q^n}$,
the following holds for any ${x,y \in \F_{q^n}}$:
$$
x\cdot y=\sum_{i=1}^r x_i^\star(x) y_i^\star(y) c_i.
$$
Unfortunately, the decomposition (\ref{tensor}) is not unique. 

\begin{definition}
The minimal number of summands in a decomposition of the tensor $T$ of the multiplication
is called the bilinear complexity of the multiplication and is denoted by
$\mu_{q}(n)$:
$$
\mu_{q}(n)= \min\left\{r \; \Big| \; T=\sum_{i=1}^{r} x_i^\star\otimes y_i^\star\otimes c_i\right\}.
$$
\end{definition}

However, the tensor $T$ admits also a symmetric decomposition:
\begin{equation}\label{symtensor}
T=\sum_{i=1}^{r} x_i^\star\otimes x_i^\star\otimes c_i.
\end{equation}
 
\begin{definition}
The minimal number of summands in a symmetric decomposition of the tensor $T$ of the multiplication
is called the symmetric bilinear complexity of the multiplication and is denoted by
$\mus_{q}(n)$:
$$
\mus_{q}(n)= \min\left\{r \; \Big| \; T=\sum_{i=1}^{r} x_i^\star\otimes x_i^\star\otimes c_i\right\}.
$$
\end{definition}

One easily gets that ${\mu_q(n) \leq \mus_{q}(n)}$. 
We know some cases where \linebreak[4]${\mu_{q}(n) = \mus_{q}(n)}$ but to the best of our knowledge, no example is known where 
we can prove that ${\mu_{q}(n) < \mus_{q}(n)}$. However, better 
upper bounds have been established in the asymmetric case and this may suggest that
in general the asymmetric bilinear complexity of the multiplication and
the symmetric one are distinct. In any case, at the moment, we must consider separately 
these two quantities. Remark that from an algorithmic point on view, as well as for
some specific applications, a symmetric bilinear algorithm can be more interesting than
an asymmetric one, unless if {\it a priori}, the constant factor in the bilinear complexity 
estimation is a little worse.
In this note we study the asymptotic behavior of the symmetric bilinear complexity
of the multiplication. More precisely we study the two following quantities:
\begin{equation}\label{M1}
\Ms_q= \limsup_{k \rightarrow \infty}\frac{\mus_q(k)}{k}, 
\end{equation}
\begin{equation}\label{m1}
\ms_q=\liminf_{k \rightarrow \infty}\frac{\mus_q(k)}{k}. 
\end{equation}

%
%
\subsection{Known results}

The  bilinear complexity $\mu_q(n)$ of the multiplication in the $n$-degree extension of a finite field $\F_q$ is known for certain values of $n$.  In particular, S. Winograd \cite{wino3} and H. de Groote \cite{groo} have shown that this complexity is ${\geq 2n-1}$, with equality holding if and only if ${n \leq \frac{1}{2}q+1}$. 
Using the principle of the D.V. and G.V. Chudnovsky algorithm \cite{chch} applied to elliptic curves, M.A. Shokrollahi has shown in \cite{shok} that the symmetric bilinear complexity of multiplication is equal to $2n$ for ${\frac{1}{2}q +1< n < \frac{1}{2}(q+1+{\epsilon (q) })}$ where $\epsilon$ is the function defined by:
$$
\epsilon (q) = \left \{
	\begin{array}{l}
 		 \mbox{greatest integer} \leq 2{\sqrt q} \mbox{ prime to $q$, if $q$ is not a perfect square} \\
  		2{\sqrt q}\mbox{, if $q$ is a perfect square.}
	\end{array} \right .
$$

Moreover, U. Baum and M.A. Shokrollahi have succeeded in \cite{bash} to construct effective optimal algorithms of type Chudnovsky in the elliptic case. 

Recently in \cite{ball1}, \cite{ball3}, \cite{baro1}, \cite{balbro}, \cite{balb}, \cite{bach} and \cite{ball5} the study made by M.A. Shokrollahi has been  generalized to algebraic function fields of genus~$g$. \\

Let us recall that the original algorithm of D.V. and G.V. Chudnovsky introduced in \cite{chch} is symmetric by definition and  leads to the following theorem:

\begin{theorem}
Let $q=p^r$ be a power of the prime $p$. The symmetric tensor rank $\mus_q(n)$ of multiplication in any finite field $\F_{q^n}$ is linear with respect to the extension degree; more precisely, there exists a constant $C_q$ such that:
$$
\mus_q(n) \leq C_q n.
$$
\end{theorem}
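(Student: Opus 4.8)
The plan is to prove the theorem by constructing, for each $n$, an explicit symmetric bilinear multiplication algorithm of Chudnovsky--Chudnovsky type from a fixed algebraic function field $F/\F_q$ whose number of rational places is large enough, and then to bound the resulting tensor rank linearly in $n$. First I would fix an algebraic function field $F/\F_q$ of genus $g$ possessing a place $Q$ of degree $n$ (such a place exists once $q^n$ is large compared to $g$, by the Hasse--Weil bound, or one can instead use a place of degree $1$ together with a divisor of degree $n$) and at least $2n+2g-1$ rational places $P_1,\dots,P_N$ distinct from $Q$. The algorithm then proceeds by choosing a divisor $\mathcal{D}$ of degree $n+g-1$ whose support avoids $Q$ and the $P_i$: evaluation at $Q$ gives a surjection $\mathcal{L}(\mathcal{D}) \twoheadrightarrow F_Q \cong \F_{q^n}$ (surjectivity because $\deg(\mathcal{D}-Q) = g-1$ and Riemann--Roch forces $\ell(\mathcal{D}) - \ell(\mathcal{D}-Q) = n$), while evaluation at the $P_i$ on $\mathcal{L}(2\mathcal{D})$ is injective (since $\deg(2\mathcal{D}) < N$). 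This yields the symmetric decomposition $x\cdot y = \sum_i x_i^\star(x)\,x_i^\star(y)\,c_i$, where $x_i^\star$ is evaluation-at-$P_i$ pulled back through the section of the evaluation at $Q$, and the number of summands is $N$, which we can take to be $O(g)$.

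The second step is to make the dependence on $n$ explicit and linear. The only obstruction to the above is having enough rational places, and this is where one invokes a fixed function field (or a short tower) over $\F_q$: choosing $F$ with $N \geq 2n + 2g - 1$ rational places, the cost is $\mu^{\mathrm{sym}}_q(n) \leq 2n + 2g(F) - 1$ for any $F$ that admits a place of degree $n$. So it remains to exhibit, for every $n$, a function field over $\F_q$ with genus $O(n)$, a place of degree $n$, and at least $2n + 2g - 1$ rational places. The crude way, sufficient for this qualitative statement, is to take for each $n$ the function field of a curve with enough points — e.g. by base-changing an $\F_{q}$-curve or by using the existence of curves over $\F_q$ with $N_q(g)$ growing linearly in $g$ (Serre), or even more elementarily by taking fibre products of Artin--Schreier/Kummer extensions. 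Since $g$ can be kept proportional to $n$, we get $\mu^{\mathrm{sym}}_q(n) \leq C_q\, n$ for a constant $C_q$ depending only on $q$.

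The main obstacle — and the only genuinely delicate point — is the interplay between the two requirements ``$F$ has a place of degree $n$'' and ``$F$ has at least $2n+2g-1$ rational places'' while keeping $g = O(n)$: a curve rich in rational points has, by Hasse--Weil, genus bounded below roughly by $N/(2\sqrt q)$, so one cannot have $N$ arbitrarily large for fixed $g$, but here we only need $N$ linear in $g$, which is always achievable (indeed $N_q(g) \geq c_q g$ for a positive constant $c_q$). Dually, once $g$ is allowed to grow with $n$, the existence of a degree-$n$ place is automatic for $n$ large by Hasse--Weil applied to the zeta function. For small $n$ (finitely many values, or $n$ below the threshold where Winograd's bound $2n-1$ already applies) one absorbs everything into the constant $C_q$. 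Putting these pieces together, with $\mathcal{D}$, $Q$ and the $P_i$ chosen as above, gives the bound $\mu^{\mathrm{sym}}_q(n) \leq C_q\, n$ and completes the proof.
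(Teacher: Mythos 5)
There is a genuine gap, and it sits exactly where the real difficulty of this theorem lies: the case of small $q$. Your construction needs, for each $n$, a function field $F/\F_q$ with $N_1(F) \geq 2n+2g-1$ rational places \emph{and} $g=O(n)$; writing $g\leq Cn$, this forces $N_1(F)-2g \geq 2n-1 \geq \frac{2}{C}g-1$, i.e.\ families of curves over $\F_q$ with $N_1/g$ bounded away from $2$ from above. By the Drinfeld--Vl\u{a}du\c{t} bound $A(q)\leq \sqrt{q}-1$, such families simply do not exist when $q\leq 9$ (for $q<9$ one even has $N_1-2g\rightarrow-\infty$ along any family with $g\rightarrow\infty$). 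So your claim that ``we only need $N$ linear in $g$, which is always achievable since $N_q(g)\geq c_q g$'' is not enough: you need $N_q(g) > 2g + \text{(linear in $n$)}$, which is a strictly stronger requirement and fails precisely for the small fields the theorem must cover (e.g.\ $\F_2$, $\F_3$, $\F_4$). This is why the standard proofs (and the machinery quoted in the paper: cases 2) and 3) of Theorem~\ref{theoprinc}, and Lemma~\ref{lemasyMqmq}) do not evaluate only at rational places: one either also evaluates at places of degree $2$, replacing the condition $N_1>2n+2g-2$ by $N_1+2N_2>2n+2g-2$ (which is attainable over any $\F_q$ because it amounts to having many $\F_{q^2}$-points, and $A(q^2)$ can exceed $2$ after possibly one more ascent), or one first proves linearity over a suitable extension $\F_{q^m}$ with $\sqrt{q^m}-1>2$ (e.g.\ via Garcia--Stichtenoth towers) and descends with the composition inequality $\mus_q(mn)\leq \mus_q(m)\cdot\mus_{q^m}(n)$, absorbing $\mus_q(m)$ into the constant $C_q$. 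Your argument as written proves the theorem only for $q$ large enough (essentially $q\geq 16$, or $q>9$ non-effectively), not for ``any finite field $\F_q$''.

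A secondary, fixable point: surjectivity of the evaluation map $\Ld{D}\rightarrow \F_{q^n}$ at the degree-$n$ place $Q$ is not forced by Riemann--Roch alone when $\deg\D=n+g-1$; the kernel is $\mathcal{L}(\D-Q)$ with $\deg(\D-Q)=g-1$, and a divisor of degree $g-1$ may be special, in which case the image has dimension $<n$. One must choose the class of $\D$ so that $\D-Q$ is non-special (this is exactly the issue of existence of non-special divisors of degree $g-1$ treated in the literature cited by the paper, and it is why hypothesis 2) of Theorem~\ref{theoprinc} mentions such a divisor), or pay an extra constant by raising $\deg\D$. This does not change the linear shape of the bound, but it should not be waved through as automatic.
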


General forms for $C_q$ have been established since, depending on the cases where $q$ is a prime or a prime power, a square or not\ldots~In order to obtain these good estimates for the constant $C_q$, S. Ballet has given in \cite{ball1} some easy to verify conditions allowing the use of the D.V. and G.V. Chudnovsky  algorithm. Then S. Ballet and R. Rolland have generalized in \cite{baro1} the algorithm using places of degree one and~two. The best finalized version of this algorithm in this direction is a generalization  introduced by N. Arnaud in \cite{arna1} and developed later by M. Cenk and F. \"Ozbudak in \cite{ceoz}. This generalization uses several coefficients, instead of just the first one in the local expansion at each place on which we perform evaluations. Recently, Randriambolona introduced in \cite{rand3} a new generalization of the algorithm, which allows asymmetry in the construction.\\

From the results of \cite{ball1} and the generalized symmetric
algorithm, we obtain (cf. \cite{ball1}, \cite{baro1}):
\begin{theorem} \label{theoprinc}
Let $q$ be a prime power and let $n>1$ be an integer. 
Let $F/\F_q$ be an algebraic function field of genus $g$ 
and $N_k$ be the number of places of degree $k$ in $F/\F_q$.
If $F/\F _q$ is such that $2g+1 \leq q^{\frac{n-1}{2}}(q^{\frac{1}{2}}-1)$ then:
\begin{enumerate}[1)]
	\item if $N_1 > 2n+2g-2$, then $$ \mus_q(n) \leq 2n+g-1,$$
	\item if there exists a non-special divisor of degree $g-1$ 
and ${N_1+2N_2>2n+2g-2}$, then $$\mus_q(n)\leq 3n+3g,$$
	\item if $N_1+2N_2>2n+4g-2$, then $$\mus_q(n)\leq 3n+6g.$$
\end{enumerate}
\end{theorem}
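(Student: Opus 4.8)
The plan is to derive all three bounds from a single construction, the symmetric Chudnovsky--Chudnovsky interpolation algorithm on $F/\F_q$ generalised by Ballet (one evaluation on a place of degree $n$, in the role of the point ``at infinity'') and by Ballet--Rolland (evaluations on places of degree one and two). Write $\mathcal{L}(\cdot)$ for a Riemann--Roch space, $\mathrm{ev}_P$ for reduction modulo a place $P$, and $F_P$ for its residue field.

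\emph{Geometric data.} First I fix a place $Q$ of degree $n$, identifying $\F_{q^n}$ with $F_Q$, an $\F_q$-algebra of dimension $n$; the inequality $2g+1\le q^{\frac{n-1}{2}}(q^{\frac12}-1)$ is essentially an effective Hasse--Weil bound forcing the number of such places to be positive. Then I choose a divisor $D$, with support disjoint from $Q$ and from the places used below (one may move $D$ in its class), such that $\mathrm{ev}_Q\colon\mathcal{L}(D)\to F_Q$ is surjective. In cases~1) and~2) I take $D=Q+A$ with $A$ a non-special divisor of degree $g-1$: then $D-Q=A$ gives $\dim\mathcal{L}(D-Q)=0$, whence $\dim\mathcal{L}(D)=n$ and $\mathrm{ev}_Q$ is \emph{bijective}; in case~2) the divisor $A$ is exactly the stated hypothesis, while in case~1) its existence has to be deduced from the ambient hypotheses. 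In case~3), to avoid assuming $A$, I take $\deg D=n+2g-1$, so that $D-Q$ has degree $2g-1$ and is automatically non-special; then $\dim\mathcal{L}(D)-\dim\mathcal{L}(D-Q)=\deg Q=n$ and $\mathrm{ev}_Q$ is surjective. Fix once and for all an $\F_q$-linear section $\sigma\colon\F_{q^n}\to\mathcal{L}(D)$ of $\mathrm{ev}_Q$ (with $\sigma=\mathrm{ev}_Q^{-1}$ in cases 1)--2)).

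\emph{The algorithm.} Choose a family $\mathcal{P}=\{P_1,\dots,P_N\}$ of places of degree one --- and, in cases 2) and 3), also of degree two --- avoiding $\mathrm{supp}\,D\cup\{Q\}$, and set $\mathrm{Ev}_{\mathcal{P}}\colon\mathcal{L}(2D)\to\prod_iF_{P_i}$, $f\mapsto(f(P_i))_i$. Since $\mathcal{L}\!\left(2D-\sum_iP_i\right)=0$ as soon as $\sum_i\deg P_i>\deg 2D$, for such $\mathcal{P}$ the map $\mathrm{Ev}_{\mathcal{P}}$ is injective on $\mathcal{L}(2D)$; in case~1) I may take all $N_1$ rational places (allowed since $N_1>2n+2g-2=\deg 2D$), and then, since the image is a subspace of $\F_q^{N_1}$ of dimension $\dim\mathcal{L}(2D)=2n+g-1$, elementary linear algebra lets me keep only $N'=2n+g-1$ of the coordinates with $\mathrm{Ev}_{\mathcal{P}}$ still injective on $\mathcal{L}(2D)$. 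Now, given $x,y\in\F_{q^n}$, put $f=\sigma(x)$ and $h=\sigma(y)$ in $\mathcal{L}(D)$; then $fh\in\mathcal{L}(2D)$, and as reduction modulo $Q$ is a ring homomorphism, $xy=(fh)(Q)=\mathrm{ev}_Q(fh)$; finally $fh$ is recovered from $\bigl(f(P_i)h(P_i)\bigr)_i=\bigl((fh)(P_i)\bigr)_i$ by inverting $\mathrm{Ev}_{\mathcal{P}}$ on its image. Every map here besides the coordinatewise products --- namely $\sigma$, $\mathrm{Ev}_{\mathcal{P}}$ and its inverse on the image, and $\mathrm{ev}_Q$ --- is $\F_q$-linear, so the only bilinear operations are the products $f(P_i)\cdot h(P_i)$ in $F_{P_i}$, costing one $\F_q$-multiplication when $\deg P_i=1$ and $\mus_q(2)=3$ when $\deg P_i=2$ (the equality $\mus_q(2)=3$ being the elementary Karatsuba/Winograd fact, valid for every $q$). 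Because $x$ and $y$ pass through identical maps, the resulting decomposition of the multiplication tensor is symmetric.

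\emph{Counting and the main obstacle.} In case~1) the algorithm uses $N'=2n+g-1$ places of degree one, so $\mus_q(n)\le 2n+g-1$. In cases 2) and 3) it is enough to find $\mathcal{P}$ with $N_1'+2N_2'>\deg 2D$, where $N_1'\le N_1$ and $N_2'\le N_2$ count the places used of each degree; the cost $N_1'+3N_2'$ is largest when only degree-two places are used, i.e.\ $2N_2'>\deg 2D$, whence $N_2'=\tfrac12\deg 2D+1$ and cost $\tfrac32\deg 2D+3$ --- which is $3n+3g$ for $\deg 2D=2n+2g-2$ (case~2)) and $3n+6g$ for $\deg 2D=2n+4g-2$ (case~3)); the hypotheses $N_1+2N_2>2n+2g-2$, resp.\ $N_1+2N_2>2n+4g-2$, are precisely what guarantees that such a family $\mathcal{P}$ exists. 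I expect the only non-formal points to be those of the first step: that the genus hypothesis yields a place of degree $n$ (a counting argument over the places of each degree, via Weil's bounds), and --- only for case~1) --- the existence of a non-special divisor of degree $g-1$; case~3) circumvents the latter at the price of a larger $D$, and case~2) posits it. Everything downstream --- the algorithm, its symmetry, and the count --- is routine.
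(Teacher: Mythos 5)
The paper itself does not prove this theorem; it imports it from \cite{ball1} and \cite{baro1}, and your construction is exactly the one used there: a degree-$n$ place $Q$ (whose existence under $2g+1\leq q^{\frac{n-1}{2}}(q^{\frac12}-1)$ is a citation to \cite[Cor.~5.2.10]{stic}, as in the paper), a divisor $D$ with $\mathrm{ev}_Q$ surjective on $\mathcal{L}(D)$, evaluation at places of degree one and two, injectivity on $\mathcal{L}(2D)$ by a degree count, and symmetry because both arguments traverse the same linear maps with $\mus_q(2)=3$ at the quadratic places. Your numerology is correct in all three cases: $\deg 2D=2n+2g-2$ (cases 1 and 2) or $2n+4g-2$ (case 3), trimming to $\dim\mathcal{L}(2D)=2n+g-1$ coordinates in case 1 (legitimate since $2D$ is non-special, its degree exceeding $2g-2$), and the all-degree-two worst case giving $3(n+g)$ and $3(n+2g)$ in cases 2 and 3.

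The genuine gap is in case 1. Your bound $2n+g-1$ requires $\deg D=n+g-1$ with $\mathrm{ev}_Q\colon\mathcal{L}(D)\to F_Q$ bijective, i.e.\ $\dim\mathcal{L}(D-Q)=0$ with $\deg(D-Q)=g-1$; equivalently you need a \emph{non-special divisor class of degree $g-1$} (then translate by $Q$ and move the representative off $Q$ and the $P_i$). You write that in case 1 this ``has to be deduced from the ambient hypotheses'' and later list it as a remaining non-formal point, but you give no argument, and it is not routine: this is precisely the delicate ingredient that forces the explicit hypothesis in case 2 and the $3g$ penalty in case 3, and its study is the object of the cited paper \cite{balb} (existence of non-special divisors of degree $g-1$ can fail for small $q$ in general; one needs either $q\geq 4$, or a counting argument comparing the number of special classes of degree $g-1$ --- bounded by the number $A_{g-1}$ of effective divisors of that degree --- with the class number, possibly exploiting $N_1>2n+2g-2$). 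As written, your argument proves case 1 only under the additional hypothesis of case 2, or else with the weaker bound $2n+3g-1$ obtained from the case-3 choice $\deg D=n+2g-1$; to claim case 1 as stated you must either supply this deduction or invoke \cite{ball1}/\cite{balb} for it explicitly.
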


\begin{theorem}\label{chudmq}
Let $q$ be a square $\geq 25$. Then
$$\ms_q\leq 2\left(1+\frac{1}{\sqrt{q}-3}\right).$$
\end{theorem}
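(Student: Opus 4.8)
The plan is to feed a family of curves over $\F_q$ with many rational places into the first item of Theorem~\ref{theoprinc}. Since $q$ is a square, $\F_q$ is the quadratic extension of $\F_{\sqrt q}$, and it is known that the Drinfeld--Vl\u{a}du\c{t} bound is attained over $\F_q$: there is a sequence of algebraic function fields $(F_s/\F_q)_{s\ge 1}$ — for instance a suitable family of Shimura modular curves, as in the title of this paper — with $g_s:=g(F_s)\to\infty$ and $N_1(F_s)/g_s\to\sqrt q-1$, where $N_1(F_s)$ denotes the number of rational places of $F_s$. I would use this family only through these two asymptotic facts.

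For each $s$, choose $n_s$ to be the largest integer with $N_1(F_s)>2n_s+2g_s-2$. From $N_1(F_s)\sim(\sqrt q-1)g_s$ one gets $n_s\sim\tfrac12(\sqrt q-3)g_s$, so $n_s\to\infty$, $n_s>1$ for $s$ large, and $g_s/n_s\to\tfrac{2}{\sqrt q-3}$. Since $g_s$ grows only linearly in $n_s$ whereas $q^{\frac{n_s-1}{2}}(q^{\frac12}-1)$ grows exponentially in $n_s$, the hypothesis $2g_s+1\le q^{\frac{n_s-1}{2}}(q^{\frac12}-1)$ of Theorem~\ref{theoprinc} holds for all $s$ large enough. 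By construction $N_1(F_s)>2n_s+2g_s-2$, so item~(1) of Theorem~\ref{theoprinc} applies and gives, for all large $s$,
$$\mus_q(n_s)\ \le\ 2n_s+g_s-1,\qquad\text{hence}\qquad \frac{\mus_q(n_s)}{n_s}\ \le\ 2+\frac{g_s-1}{n_s}.$$
Letting $s\to\infty$, the right-hand side tends to $2+\tfrac{2}{\sqrt q-3}=2\big(1+\tfrac{1}{\sqrt q-3}\big)$. Since the integers $n_s$ are infinitely many and tend to infinity, taking $\liminf$ over all $k$ yields $\ms_q=\liminf_{k\to\infty}\mus_q(k)/k\le 2\big(1+\tfrac{1}{\sqrt q-3}\big)$.

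The genuinely deep ingredient is the first step: the fact that the Drinfeld--Vl\u{a}du\c{t} bound is actually reached over $\F_q$ by a concrete family of curves (Shimura modular curves, after good reduction) rests on the arithmetic theory of these curves and is where all the difficulty is concentrated; once it is granted, the remainder is elementary bookkeeping, the only mildly delicate point being the verification of the genus inequality $2g_s+1\le q^{\frac{n_s-1}{2}}(q^{\frac12}-1)$ for the chosen pairs $(n_s,g_s)$, which as noted is automatic because $g_s$ is linear and the bound exponential in $n_s$. The hypothesis $q\ge 25$ (i.e. $\sqrt q\ge 5$) is used to guarantee $\sqrt q-3\ge 2>0$, without which neither the above construction of $n_s$ nor the stated bound would be meaningful. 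Finally, note that the analogous statement for $\Ms_q$ would additionally require the genera of the family to be dense, i.e. $g_{s+1}/g_s\to 1$, so that every sufficiently large $n$ lies in the admissible range of some $F_s$; for the $\liminf$ proved here no such density is needed.
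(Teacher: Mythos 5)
Your proof is correct and follows essentially the same route as the paper: the paper does not reprove Theorem~\ref{chudmq} directly but establishes the more general first proposition of Section~2 (bounding $\ms_q$ by $2\bigl(1+\frac{1}{A(q)-2}\bigr)$ via a Drinfeld--Vl\u{a}du\c{t}-attaining family, the choice of a suitable $n_s$, and item~1 of Theorem~\ref{theoprinc}), which specializes to your argument when $q$ is a square and $A(q)=\sqrt{q}-1$. Your selection of $n_s$ as the largest admissible integer, instead of the paper's $\epsilon$-perturbed floor, is only a cosmetic variation of the same bookkeeping.
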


Moreover, let us recall a very useful lemma due to  D.V. and G.V. Chudnovsky~\cite{chch} and 
Shparlinski, Tsfasman and Vl\u{a}du\c{t}~\cite[Lemma 1.2 and Corollary 1.3]{shtsvl}.

\begin{lemma}\label{lemasyMqmq}
For any prime power $q$ and for all positive integers $n$ and $m$, one has 
$$
\mu_{q}(m)\leq\mu_q(mn)\leq \mu_q(n)\cdot \mu_{q^n}(m),
$$
$$
m_{q}\leq m_{q^n} \cdot \mu_{q}(n)/n,
$$
$$
M_{q}\leq M_{q^n}\cdot\mu_{q}(n).
$$
\end{lemma}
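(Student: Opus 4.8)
The plan is to derive all three assertions from one substitution principle for bilinear algorithms, followed by an elementary limit computation, in three steps. \emph{Step 1: the composition bound $\mu_q(mn)\le\mu_q(n)\cdot\mu_{q^n}(m)$.} I would view $\F_{q^{mn}}$ as a degree-$m$ extension of $\F_{q^n}$ (legitimate since $n\mid mn$) and fix an optimal bilinear decomposition $\alpha\beta=\sum_{i=1}^{\mu_{q^n}(m)}u_i^\star(\alpha)\,v_i^\star(\beta)\,w_i$ of the multiplication tensor of $\F_{q^{mn}}/\F_{q^n}$, where the $u_i^\star,v_i^\star$ lie in the $\F_{q^n}$-dual of $\F_{q^{mn}}$ and $w_i\in\F_{q^{mn}}$, together with an optimal bilinear decomposition $ab=\sum_{j=1}^{\mu_q(n)}p_j^\star(a)\,q_j^\star(b)\,e_j$ of the multiplication tensor of $\F_{q^n}/\F_q$. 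Substituting the second into each product $u_i^\star(\alpha)\,v_i^\star(\beta)\in\F_{q^n}$ appearing in the first yields $\alpha\beta=\sum_{i,j}(p_j^\star\!\circ u_i^\star)(\alpha)\,(q_j^\star\!\circ v_i^\star)(\beta)\,(e_jw_i)$, a decomposition with $\mu_q(n)\mu_{q^n}(m)$ summands; since each $p_j^\star\!\circ u_i^\star$ and $q_j^\star\!\circ v_i^\star$ is $\F_q$-linear on $\F_{q^{mn}}$ and each $e_jw_i$ lies in $\F_{q^{mn}}$, this is a legitimate decomposition of the multiplication tensor of $\F_{q^{mn}}/\F_q$, which gives the bound.

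\emph{Step 2: the restriction bound $\mu_q(m)\le\mu_q(mn)$.} Here I would use that $m\mid mn$ to fix an $\F_q$-linear retraction $\pi\colon\F_{q^{mn}}\to\F_{q^m}$ restricting to the identity on $\F_{q^m}$, and then apply $\pi$ to an optimal decomposition $\gamma\delta=\sum_{i=1}^{\mu_q(mn)}x_i^\star(\gamma)\,y_i^\star(\delta)\,c_i$ of the multiplication tensor of $\F_{q^{mn}}/\F_q$ specialised to $x,y\in\F_{q^m}$ (where the product is unchanged). This gives $xy=\sum_i x_i^\star(x)\,y_i^\star(y)\,\pi(c_i)$, a decomposition of the multiplication tensor of $\F_{q^m}/\F_q$ with $\mu_q(mn)$ terms once the $x_i^\star,y_i^\star$ are restricted to $\F_{q^m}$ and one notes $\pi(c_i)\in\F_{q^m}$. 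Steps 1 and 2 together yield the first displayed line.

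\emph{Step 3: the asymptotic bounds.} Dividing $\mu_q(m)\le\mu_q(n)\mu_{q^n}(m)$ by $m$ and passing to $\limsup_{m\to\infty}$ gives $M_q\le M_{q^n}\mu_q(n)$. For the $\liminf$ inequality I would pick $m_j\to\infty$ with $\mu_{q^n}(m_j)/m_j\to m_{q^n}$, divide $\mu_q(nm_j)\le\mu_q(n)\mu_{q^n}(m_j)$ by $nm_j$, and use that $(nm_j)_j$ is a subsequence of $\N$ to conclude $m_q\le\liminf_j\mu_q(nm_j)/(nm_j)\le(\mu_q(n)/n)\,m_{q^n}$. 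The only step calling for genuine care is Step 1, where the two base fields $\F_q$ and $\F_{q^n}$ must be kept rigorously apart so as to confirm that the composed forms remain $\F_q$-linear and that the ``constants'' $e_jw_i$ are bona fide elements of $\F_{q^{mn}}$; after that, the one extra subtlety in Step 3 is merely that the factor $1/n$ in the $\liminf$ bound arises by normalising by $nm_j$ rather than by $m_j$, which is permissible for a $\liminf$ precisely because restricting to a subsequence can only raise it.
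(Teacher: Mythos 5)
Your proof is correct: the composition (substitution) argument for $\mu_q(mn)\leq\mu_q(n)\cdot\mu_{q^n}(m)$, the retraction argument for $\mu_q(m)\leq\mu_q(mn)$, and the subsequence computation for $m_q$ and $M_q$ are all sound, including the careful point that normalising by $nm_j$ is legitimate for the $\liminf$. The paper itself gives no proof but cites Chudnovsky--Chudnovsky and Shparlinski--Tsfasman--Vl\u{a}du\c{t}, and your argument is essentially the standard one found there.
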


Note that these inequalities are also true in the symmetric case.
Recall the following definitions that will be useful in the sequel.
Let $F/\F_q$ be a function field over the finite field $\F_q$ and $N_1(F)$ be the number
of places of degree one of $F/\F_q$.
Let us define:
$$
N_q(g)= \max \Big\{N_1(F)\, \big |\, F \mbox{ is a function field over }\F_q \mbox{ of genus }g  \Big\}
$$
and
$$
A(q)=\limsup_{g\rightarrow +\infty} \frac{N_q(g)}{g}.
$$
We know that (Drinfeld-Vl\u{a}du\c{t} bound):
$$A(q) \leq q^{\frac{1}{2}}-1,$$
the bound being reached if and only if $q$ is a square.

\section{New upper bounds for ${\ms_q}$ and ${\Ms_q}$}

In this section, we give upper bounds for the asymptotical quantities $\Ms_q$ and $\ms_q$ which are 
defined respectively by (\ref{M1}) and (\ref{m1}). 
As was noted in \cite[p. 694]{cacrxing} and more precisely in \cite[Section 5]{cacrxing2} (cf. also \cite{piel}), 
Theorems 3.1 and 3.9 in \cite{shtsvl} are not completely
correct.
We are going to repair that in the  following two propositions.

\begin{proposition}
 Let $q$ be a prime power such that $A(q)>2$. Then
$$\ms_q \leq 2\left(1+\frac{1}{A(q)-2}\right).$$
\end{proposition}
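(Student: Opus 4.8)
The plan is to feed a sequence of algebraic function fields over $\F_q$ whose ratio (number of rational places)/(genus) tends to $A(q)$ into the corrected Chudnovsky-type estimate of Theorem~\ref{theoprinc}, part~1), and to optimise the extension degree $n$ against the genus $g$ so that the resulting bound $\mus_q(n)\le 2n+g-1$ gives the claimed slope.

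First I would fix a small $\epsilon\in(0,A(q)-2)$ and, using that $A(q)=\limsup_{g\to\infty}N_q(g)/g$, select an increasing sequence of genera $g_i\to\infty$ with $N_q(g_i)>(A(q)-\epsilon)\,g_i$; for each $i$ let $F_i/\F_q$ be a function field of genus $g_i$ attaining $N_1(F_i)=N_q(g_i)$ (the maximum is reached, by the Weil bound). I would then set
$$
n_i:=\left\lfloor \tfrac{A(q)-\epsilon-2}{2}\, g_i\right\rfloor,
$$
so that $n_i\to\infty$ and $n_i/g_i\to(A(q)-\epsilon-2)/2$. The next step is to check the two hypotheses of Theorem~\ref{theoprinc} for the pair $(F_i,n_i)$ when $i$ is large: on the one hand $2n_i+2g_i-2\le(A(q)-\epsilon)g_i-2<N_1(F_i)$, so the condition $N_1>2n+2g-2$ of part~1) holds; on the other hand $n_i$ grows linearly in $g_i$ whereas $q^{(n_i-1)/2}(q^{1/2}-1)$ grows exponentially in $n_i$, so $2g_i+1\le q^{(n_i-1)/2}(q^{1/2}-1)$ (and $n_i>1$) hold for all $i$ large enough. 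Theorem~\ref{theoprinc} then yields $\mus_q(n_i)\le 2n_i+g_i-1$, hence $\mus_q(n_i)/n_i\le 2+(g_i-1)/n_i$.

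Finally, since the $n_i$ form an infinite set of integers, $\ms_q=\liminf_{k}\mus_q(k)/k$ is bounded above by $\lim_i\bigl(2+(g_i-1)/n_i\bigr)=2+\tfrac{2}{A(q)-\epsilon-2}$; letting $\epsilon\to 0$ gives $\ms_q\le 2+\tfrac{2}{A(q)-2}=2\bigl(1+\tfrac{1}{A(q)-2}\bigr)$. I expect the main point requiring care — precisely because Theorems~3.1 and~3.9 of \cite{shtsvl} were not completely correct — to be the honest verification that the corrected hypothesis $2g+1\le q^{(n-1)/2}(q^{1/2}-1)$ of Theorem~\ref{theoprinc} is compatible with taking $n$ as large as $\approx\tfrac{A(q)-2}{2}\,g$; since that hypothesis forces only $n\gtrsim 2\log_q(2g)$, there is ample room and no conflict arises. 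A secondary, harmless point is that the $n_i$ are sparse among the integers, which is fine here because we are bounding a $\liminf$ and not a $\limsup$.
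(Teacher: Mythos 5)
Your proof is correct and takes essentially the same route as the paper: both select a sequence of function fields over $\F_q$ with $N_1/g\to A(q)$, choose $n$ growing linearly in $g$ with slope just below $(A(q)-2)/2$ (you via the cutoff $(A(q)-\epsilon-2)g_i/2$, the paper via $\lfloor (N_1(s)-2(1+\epsilon)g_s)/2\rfloor$), verify the two hypotheses of Theorem~\ref{theoprinc}, part~1), and let $\epsilon\to 0$ after passing to the limit along the subsequence $n_i$, which suffices since $\ms_q$ is a $\liminf$. The only cosmetic difference is that you insist on fields attaining $N_q(g_i)$, whereas the paper takes an arbitrary sequence realizing $A(q)$; this does not affect the argument.
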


\begin{proof}\label{newboundmq}
 Let ${\{F_s/\F_q\}_s}$ be a sequence of algebraic function fields defined over $\F_q$.
Let us denote by $g_s$ the genus of $F_s/\F_q$ and by $N_1(s)$ the number of places of degree 1
of $F_s/\F_q$. Suppose that the sequence $\left(F_s/\F_q\right)_s$ was chosen such that:
\begin{enumerate}
 \item $\lim_{s \rightarrow +\infty}g_s=+\infty$,
 \item $\lim_{s \rightarrow +\infty}\frac{N_1(s)}{g_s}=A(q)$.
\end{enumerate}
Let $\epsilon$ be any real number such that
$0 < \epsilon < \frac{A(q)}{2} -1$.
Let us define the following integer
 $$n_s=\left\lfloor\frac{N_1(s)-2g_s(1+\epsilon)}{2}\right\rfloor.$$
Let us remark that
$$N_1(s)=g_s A(q) + o(g_s),$$
$$\mbox{so }N_1(s)-2(1+\epsilon)g_s=g_s\left(\strut A(q)-2(1+\epsilon)\right)+o(g_s).$$
Then the following holds:
\begin{enumerate}
 \item there exists an integer $s_0$ such that for any $s \geq s_0$ the integer $n_s$
is strictly positive,
 \item for any real number $c$ such that $0<c<A(q)-2(1+\epsilon)$ there exists
an integer $s_1$ such that for any integer $s\geq s_1$ the following holds:
$n_s \geq \frac{c}{2}g_s$, hence $n_s$ tends to $+\infty$,
 \item there exists an integer $s_2$ such that for any integer $s\geq s_2$
the following holds: 
$2g_s+1 \leq q^{\frac{n_s-1}{2}}\left(q^{\frac{1}{2}}-1\right)$
and consequently there exists a place of degree $n_s$ (cf. \cite[Corollary 5.2.10 (c) p. 207]{stic}),
 \item the following inequality holds:
$N_1(s)> 2n_s+2g_s-2$ and consequently, using Theorem 
\ref{theoprinc}  we conclude that ${\mus_q(n_s)  \leq  2n_s+g_s-1}$.                                                                                   
\end{enumerate}
Consequently, 
$$\frac{\mus_q(n_s)}{n_s} \leq 2+\frac{g_s-1}{n_s},$$
so
$$\ms_q \leq 2+ \lim_{s \rightarrow +\infty}\frac{2g_s-2}{N_1(s)-2(1+\epsilon)g_s-2}
\leq 2\left( 1+ \frac{1}{A(q)-2(1+\epsilon)}\right).$$
This inequality holding for any $\epsilon >0$ sufficiently small, we then obtain the result. \qed
\end{proof}

\begin{corollary}\label{coromq1}
Let $q=p^m$ be a prime power such that $q \geq 4$.
Then
$$\ms_{q^2}\leq 2\left(1+\frac{1}{q-3}\right).$$
\end{corollary}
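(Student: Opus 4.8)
The plan is to apply the preceding Proposition with $q$ replaced by $q^2$. Since $q = p^m \geq 4$ is a prime power, $q^2$ is a perfect square, and by the Drinfeld--Vl\u{a}du\c{t} bound together with the fact that the bound is attained exactly for squares, we have $A(q^2) = (q^2)^{1/2} - 1 = q - 1$. The hypothesis $q \geq 4$ then gives $A(q^2) = q - 1 \geq 3 > 2$, so the Proposition applies and yields
$$\ms_{q^2} \leq 2\left(1 + \frac{1}{A(q^2) - 2}\right) = 2\left(1 + \frac{1}{(q-1) - 2}\right) = 2\left(1 + \frac{1}{q-3}\right),$$
which is exactly the claimed bound.

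The only point requiring a word of justification is the existence of a sequence of function fields over $\F_{q^2}$ realizing $A(q^2)$, i.e. the hypothesis implicitly used inside the proof of the Proposition that a sequence $(F_s/\F_{q^2})_s$ with $g_s \to \infty$ and $N_1(s)/g_s \to A(q^2)$ actually exists. This is guaranteed because $q^2$ is a square: the classical modular or Shimura curve towers (equivalently, the Garcia--Stichtenoth towers over $\F_{q^2}$) provide such a sequence attaining $q - 1$, so the $\limsup$ defining $A(q^2)$ is in fact a genuine limit along an explicit family. With this in hand, the chain of inequalities above is immediate.

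The main (and essentially only) obstacle is bookkeeping: one must make sure that the strict inequality $A(q^2) > 2$ needed to invoke the Proposition is not merely $\geq$; here $q \geq 4$ forces $q - 1 \geq 3$, so $A(q^2) - 2 \geq 1 > 0$ and the denominator $q - 3$ is a positive integer, so no division-by-zero or sign issue arises. Hence the corollary follows directly by specialization, with no further computation needed. \qed
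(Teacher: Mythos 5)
Your proof is correct and matches the paper's intent exactly: the corollary is stated there as an immediate consequence of the Proposition, obtained by specializing to the square $q^2$, using that the Drinfeld--Vl\u{a}du\c{t} bound is attained for squares so that $A(q^2)=q-1$, and checking $q\geq 4$ gives $A(q^2)-2=q-3>0$. Your extra remark on the existence of a family attaining $A(q^2)$ is a reasonable (and harmless) supplement, though the sequence used in the Proposition's proof already exists by the definition of $A(q^2)$ as a $\limsup$.
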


Note that this corollary lightly improves Theorem \ref{chudmq}. Now in the case of arbitrary $q$, we obtain:

\begin{corollary}\label{coromq2}
For any $q=p^m>3$,
$$\ms_{q}\leq 3\left(1+\frac{1}{q-3}\right).$$
\end{corollary}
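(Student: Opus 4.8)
The plan is to deduce Corollary~\ref{coromq2} from Corollary~\ref{coromq1} by a standard descent argument using the submultiplicativity of the symmetric bilinear complexity recorded in Lemma~\ref{lemasyMqmq}. The key observation is that for $q = p^m > 3$, the field $\F_{q^2}$ is a quadratic extension of $\F_q$, and one controls the cost of transporting a multiplication algorithm over $\F_{q^2}$ down to $\F_q$ by the factor $\mu^{\mathrm{sym}}_q(2)/2$.

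First I would apply the symmetric analogue of the middle inequality in Lemma~\ref{lemasyMqmq} (valid in the symmetric case, as noted immediately after the lemma) with $n = 2$: this yields
$$
\ms_q \leq \ms_{q^2}\cdot\frac{\mus_q(2)}{2}.
$$
Next I would invoke the exact value $\mus_q(2) = 3$ for every prime power $q$, which follows from the de Groote--Winograd result cited in the excerpt: indeed $\mu_q(2) \geq 2\cdot 2 - 1 = 3$ always, and the classical Karatsuba-type identity gives a symmetric algorithm with $3$ multiplications, so $\mus_q(2) = 3$. (In fact the bound $n \leq \tfrac12 q + 1$ for optimality of $2n-1$ is irrelevant here since $3 = 2n$ for $n=2$ is what we actually use; one simply needs $\mus_q(2) \le 3$, which Karatsuba provides unconditionally.) Substituting $\mus_q(2)/2 = 3/2$ gives
$$
\ms_q \leq \frac{3}{2}\,\ms_{q^2}.
$$

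Then I would plug in Corollary~\ref{coromq1}. Since $q = p^m > 3$ means $q \geq 4$, Corollary~\ref{coromq1} applies and gives $\ms_{q^2} \leq 2\bigl(1 + \tfrac{1}{q-3}\bigr)$. Combining,
$$
\ms_q \leq \frac{3}{2}\cdot 2\left(1 + \frac{1}{q-3}\right) = 3\left(1 + \frac{1}{q-3}\right),
$$
which is exactly the claimed bound. The argument is essentially a one-line chaining of two inequalities, so there is no serious obstacle; the only point requiring care is the justification that $\mus_q(2) = 3$ for all $q$ (in particular for small $q$ where $n = 2 > \tfrac12 q + 1$ can fail, e.g.\ $q = 2$), which is why one should appeal directly to the explicit Karatsuba identity for the upper bound rather than to the de Groote--Winograd optimality statement. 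Since the corollary only concerns $q \geq 4$, even $q = 2,3$ cause no trouble, but stating $\mus_q(2) \le 3$ unconditionally keeps the proof clean.
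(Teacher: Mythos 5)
Your proposal is correct and follows essentially the same route as the paper: apply the symmetric version of Lemma~\ref{lemasyMqmq} with $n=2$ to get $\ms_q\leq \ms_{q^2}\cdot\mus_q(2)/2$, use $\mus_q(2)=3$ (the paper simply asserts this where you justify the needed upper bound via the symmetric Karatsuba identity), and conclude with Corollary~\ref{coromq1}, which applies since $q>3$ means $q\geq 4$.
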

\begin{proof}
For any $q=p^m>3$, we have $q^2=p^{2m}\geq 16$ and thus Corollary \ref{coromq1} gives 
$\ms_{q^2}\leq 2\left(1+\frac{1}{q-3}\right)$. Then, by Lemma \ref{lemasyMqmq}, we have 
$$\ms_q\leq \ms_{q^2}\cdot\mus_q(2)/2$$
which gives the result since $\mus_q(2)=3$ for any $q$. \qed
\end{proof}

\vspace{.5em}

Now, we are going to show that for $\Ms_q$ 
the same upper bound as for $\ms_q$ can be proved though only in the case of $q$ being an even power of a prime. 
However, we are going to prove that in the case of $q$ being an odd power of a prime, the difference between 
the two bounds is very slight.

\begin{proposition}\label{newbound}
Let $q=p^m$ be a prime power such that $q \geq 4$. Then
$$\Ms_{q^2}\leq 2\left(1+\frac{1}{q-3}\right).$$
\end{proposition}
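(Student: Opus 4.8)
The plan is to mimic the proof of the preceding proposition for $\ms_q$, but to replace the weaker information "$\limsup_g N_q(g)/g = A(q)$" (which only guarantees a subsequence of good function fields) by a genuine \emph{tower} of Shimura modular curves over $\F_{q^2}$ attaining the Drinfeld--Vl\u{a}du\c{t} bound, so that every sufficiently large genus in the tower comes with many rational places. This is precisely what is needed to control a $\limsup$ rather than a $\liminf$: one must be able, for every large $k$, to find a function field $F/\F_{q^2}$ in the family of genus $g$ close to the target with $N_1(F)$ of order $(\sqrt{q^2}-1)g = (q-1)g$ and with $g$ not much larger than the previous available genus.

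First I would fix the tower $\{F_s/\F_{q^2}\}_s$ coming from Shimura modular curves, with genera $g_s \to \infty$ and $N_1(F_s)/g_s \to q-1 = A(q^2)$; the key extra property (available for such modular towers, via the ramification structure) is that the genus ratio $g_{s+1}/g_s$ is bounded, say by some constant, so that the genera $g_s$ are "dense enough" on a logarithmic scale. Then, given an arbitrarily small $\epsilon>0$, I set $n_s = \left\lfloor \frac{N_1(F_s) - 2g_s(1+\epsilon)}{2}\right\rfloor$ exactly as before. As in the $\ms$ proof, for $s$ large: $n_s>0$, $n_s \to \infty$, the condition $2g_s+1 \le q^{n_s-1}(q-1)$ holds so that a place of degree $n_s$ exists over $\F_{q^2}$, and $N_1(F_s) > 2n_s + 2g_s - 2$, whence Theorem~\ref{theoprinc}(1) applied over $\F_{q^2}$ gives $\mus_{q^2}(n_s) \le 2n_s + g_s - 1$, so $\frac{\mus_{q^2}(n_s)}{n_s} \le 2 + \frac{g_s-1}{n_s}$.

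The point that makes this a $\limsup$ bound rather than just a $\liminf$ bound is the interpolation step: I would show that the sequence $(n_s)_s$ is "syndetic enough" that for \emph{every} sufficiently large integer $k$ one can pick an $s$ with $n_s$ close to $k$, and then use a descent/padding argument to bound $\mus_{q^2}(k)$ in terms of $\mus_{q^2}(n_s)$. Concretely, because $g_{s+1}/g_s$ is bounded and $n_s \sim \frac{q-1-2(1+\epsilon)}{2} g_s$, the ratios $n_{s+1}/n_s$ are bounded; then for $k$ between $n_s$ and $n_{s+1}$ one uses the standard fact (monotonicity-type inequalities for $\mus$, e.g. the estimates underlying Lemma~\ref{lemasyMqmq} together with $\mus_{q^2}(k) \le \mus_{q^2}(k+1) \le \mus_{q^2}(k) + $ something controlled, or a direct Chudnovsky-type argument allowing the degree to overshoot) to get $\mus_{q^2}(k) \le 2k + g_s - 1 + o(k)$ with the same leading constant. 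Passing to the limsup and then letting $\epsilon \to 0$ yields $\Ms_{q^2} \le 2\left(1 + \frac{1}{q-3}\right)$.

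The main obstacle is exactly this interpolation/density step: one must be sure that the Shimura modular tower has controlled genus gaps (boundedness of $g_{s+1}/g_s$) and, more delicately, that passing from a degree-$n_s$ estimate to a degree-$k$ estimate for intermediate $k$ does not inflate the constant $2$ — this requires either invoking the known behaviour of $\mus$ under small changes of the extension degree, or, more robustly, re-running the Chudnovsky construction on the \emph{same} curve $F_s$ but choosing a place of degree exactly $k$ (which exists as long as $2g_s+1 \le q^{k-1}(q-1)$ and $N_1(F_s) > 2k + 2g_s - 2$, conditions that hold for all $k$ in the relevant window around $n_s$). I would therefore organize the proof around: (i) the existence of the modular tower over $\F_{q^2}$ with the density property, (ii) for each large $k$ choosing the right $F_s$ and a place of degree $k$ on it, (iii) applying Theorem~\ref{theoprinc}(1), and (iv) taking $\limsup$ and $\epsilon \to 0$.
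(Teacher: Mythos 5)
Your overall strategy --- for every sufficiently large degree choose a curve over $\F_{q^2}$ from a family that is dense in genus and has many rational points, apply Theorem~\ref{theoprinc}(1) with a place of degree exactly the target, then let $\epsilon\to 0$ --- is indeed the paper's strategy. But the density step, which you correctly identify as the crux, is where your proposal has a genuine gap. You only ask that the genus ratio $g_{s+1}/g_s$ of your ``tower'' be \emph{bounded}, and claim this yields $\mus_{q^2}(k)\leq 2k+g_s-1+o(k)$ for all intermediate $k$. That does not follow: by the definition of $n_s$, the condition $N_1(F_s)>2k+2g_s-2$ survives only for $k\leq n_s+\epsilon g_s$ or so, whereas the gap up to $n_{s+1}$ is of order $g_{s+1}-g_s$; so for most $k$ in between you are forced onto $F_{s+1}$ and only get $\mus_{q^2}(k)\leq 2k+g_{s+1}-1$. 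If $g_{s+1}/g_s$ is merely bounded by some $C>1$, the constant in the final bound is inflated by essentially the factor $C$, and you do not recover $2\left(1+\frac{1}{q-3}\right)$. What is actually needed is $g_{s+1}/g_s\to 1$, and this is precisely what genuine towers (nested covers, e.g.\ modular or recursive towers) do \emph{not} provide --- their genus ratios tend to a constant strictly greater than $1$. The paper therefore does not use a tower at all: for $q=p$ it uses the family $X_0(11\ell)$ indexed by primes $\ell$, where the Baker--Harman--Pintz bound $\ell_{k+1}-\ell_k\leq\ell_k^{0.525}$ gives consecutive genera with ratio tending to $1$; for $q=p^m$, $m>1$, it fixes one Shimura curve $X_\Gamma$ over a totally real abelian field $L$ in which $p$ is inert and takes its unramified degree-$\ell$ covers $X_\ell$ (so $g_\ell=1+\ell(g-1)$) for primes $\ell$ totally split in $L$, i.e.\ $\ell\equiv 1\pmod N$, the ratio of consecutive such primes tending to $1$ by density of primes in arithmetic progressions. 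Producing a family with this ratio-tending-to-one property is the substantive content of the proof, and your proposal simply asserts it as ``available via the ramification structure.''

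Your fallback interpolation via ``monotonicity-type'' control of $\mus_{q^2}(k)$ in $k$ is also not available: Lemma~\ref{lemasyMqmq} only gives $\mus_q(m)\leq\mus_q(mn)$, i.e.\ monotonicity along multiples, and since $\F_{q^{2m}}$ is neither a subalgebra nor a quotient of $\F_{q^{2(m+1)}}$, no bound of the form $\mus_{q^2}(k+1)\leq \mus_{q^2}(k)+O(1)$ is known; you would have to prove such a statement, which is not a ``standard fact.'' Your more robust variant --- rerun the Chudnovsky construction on the same curve with a place of degree exactly $k$ --- is the right move and is what the paper does for every $n$, but, as explained above, it only works inside the window permitted by $N_1(F_s)>2k+2g_s-2$, which again forces the genus-ratio-to-one requirement. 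Finally, note that the paper also has to treat $q=p$ and $q=p^m$ ($m>1$) by different constructions (classical modular curves versus Shimura curves over a totally real field chosen so that the reduction is defined over the right residue field); your proposal does not address how the required family over $\F_{q^2}$ is actually obtained in either case.
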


\begin{proof}
Let $q=p^m$ be a prime power such that $q\geq4$. Let us consider two cases.
First, we suppose that $q=p$. 
Moreover, firstly, let us consider the characteristic $p$ such that $p\neq 11$. 
Then it is known (\cite{tsvl} and \cite{shtsvl})  that the curve $X_k=X_0(11\ell_k)$, 
where $\ell_k$ is the $k$th prime number, has a genus $g_k=\ell_k$ and satisfies 
${N_1(X_k(\F_{q^2}))\geq (q-1)(g_k+1)}$ where $N_1(X_k(\F_{q^2}))$ denotes the number 
of  rational points over $\F_{q^2}$ of the curve $X_k$.
Let us consider a sufficiently large $n$.
There exist two consecutive prime numbers $\ell_k$ and $\ell_{k+1}$ such 
that ${(p-1)(\ell_{k+1}+1)> 2n+2\ell_{k+1}-2}$ and ${(p-1)(\ell_k+1)\leq 2n+2\ell_k-2}$. 
Let us consider  the algebraic function field 
$F_{k+1}/\F_{p^2}$ associated to the curve $X_{k+1}$ of genus $\ell_{k+1}$ defined over $\F_{p^2}$. 
Let $N_i(F_{k}/\F_{p^2})$ be the number 
of places of degree $i$ of $F_{k}/\F_{p^2}$. Then we get 
${N_1(F_{k+1}/\F_{p^2})\geq (p-1)(\ell_{k+1}+1)> 2n+2\ell_{k+1}-2}$. 
Moreover, it is known that $N_n(F_{k+1}/\F_{p^2})>0$ for any integer $n$ sufficiently large. 
We also know that $\ell_{k+1}-\ell_k\leq \ell_k^{0,525}$ for any integer $k\geq k_0$ where $k_0$ 
can be effectively determined by \cite{bahapi}. Then there exists a real number ${\epsilon>0}$ such that 
${\ell_{k+1}-\ell_k=\epsilon \ell_k\leq  \ell_k^{0,525}}$ namely ${\ell_{k+1}\leq(1+\epsilon)\ell_k}$. 
It is sufficient to choose $\epsilon$ such that $\epsilon \ell_k^{0,475}\leq 1$.
 Consequently, for any integer $n$ sufficiently large, this\linebreak[4]algebraic function field $F_{k+1}/\F_{p^2}$ satisfies Theorem \ref{theoprinc}, 
and so \linebreak[4]${\mus_{p^2}(n)\leq 2n+\ell_{k+1}-1\leq 2n+(1+\epsilon)\ell_k-1}$ with ${\ell_k\leq \frac{2n}{p-3}-\frac{p+1}{p-3}}$. 
Thus, \linebreak[4]as ${n\rightarrow +\infty}$ then ${\ell_k\rightarrow +\infty}$ and ${\epsilon \rightarrow 0}$, so
we obtain ${\Ms_{p^2}\leq 2\left(1+\frac{1}{p-3}\right)}$. Note that for $p=11$, Proposition~4.1.20 in \cite{tsvl} 
enables us to obtain ${g_k=\ell_k+O(1)}$.

Now, let us study the more difficult case where $q=p^m$ with $m>1$. 
We use the Shimura curves as in \cite{shtsvl}. 
Recall the construction of this good family. Let $L$ be a totally real abelian over 
$\Q$ number field of degree $m$ in which $p$ is inert, thus the residue class field 
${\mathcal O}_L/(p)$ of $p$, where ${\mathcal O}_L$ denotes the ring of integers of $L$, 
is isomorphic to the finite field $\F_{q}$. Let $\wp$ be a prime of $L$ which does 
not divide $p$ and let $B$ be a quaternion algebra for which 
$$
B\otimes_{\Q}\R=\mathrm{M}_2(\R) \otimes \mathbb{H} \otimes \cdots \otimes \mathbb{H}
$$ 
where $\mathbb{H}$ is the skew field of Hamilton quaternions. Let $B$ be 
also unramified at any finite place if $(m-1)$ is even; let $B$ be also unramified 
outside infinity and $\wp$ if $(m-1)$ is odd. Then, over $L$ one can define the 
Shimura curve by its complex points ${X_{\Gamma}(\C)=\Gamma\setminus \mathfrak{h}}†$, where 
$\mathfrak{h}$ is the Poincar\'e upper half-plane and $\Gamma$ is the group of units 
of a maximal order ${\mathcal O}$ of $B$ with totally positive norm modulo its center. 
Hence,  the considered Shimura curve admits an integral model over $L$ and 
it is well known that its reduction $X_{\Gamma,p}(\F_{p^{2m}})$ modulo $p$ 
is good and is defined over the residue class field ${\mathcal O}_L/(p)$ of $p$, 
which is isomorphic to $\F_q$ since $p$ is inert in $L$. 
Moreover, by \cite{ihar1}, the number ${N_1(X_{\Gamma,p}(\F_{q^2}))}$ of $\F_{q^2}$-points 
of $X_{\Gamma,p}$ is such that ${N_1(X_{\Gamma,p}(\F_{q^2}))\geq (q-1)(g+1)}$, where $g$ denotes the 
genus of $X_{\Gamma,p}(\F_{q^{2}})$. 
Let now $\ell$ be a prime which is greater than the 
maximum order of stabilizers $\Gamma_z$, where $z \in \mathfrak{h}$ is a fixed point 
of $\Gamma$ and let ${\wp \nmid \ell}$. Let ${\Gamma_0(\ell)_\ell}$ be the following subgroup of ${\mathrm{GL}_2(\Z_\ell)}$:
$$
\Gamma_0(\ell)_\ell=\left \lbrace
\left (
\begin{array}{ll}
 a & b \cr
 c & d
\end{array}
\right )
\in \mathrm{GL}_2(\Z_\ell)\, ; \, c \equiv 0 \pmod{\ell} \right \rbrace .
$$
Suppose that $\ell$ splits completely in $L$. Then there exists an embedding ${L \rightarrow \Q_\ell}$ where $\Q_\ell$ denotes 
the usual $\ell$-adic field, and since ${B\otimes_{\Q} \Q_\ell=\mathrm{M}_2(\Q_\ell)}$, we have a natural map: 
$$\phi_\ell: \Gamma \rightarrow \mathrm{GL}_2(\Z_\ell).$$
Let $\Gamma_\ell$ be the inverse image of $\Gamma_0(\ell)_\ell$ in $\Gamma$ under $\phi_\ell$. 
Then $\Gamma_\ell$ is a subgroup of $\Gamma$ of index $\ell$. We consider the Shimura curve $X_\ell$ with 
$$X_\ell(\C)=\Gamma_\ell\setminus\mathfrak{h}.$$ 
It admits an integral model over $L$ and so can be defined over $L$. Hence, its reduction $X_{\ell,p}$ modulo $p$ is good 
and it is defined over the residue class field ${\mathcal O}_L/(p)$ of $p$, which is isomorphic to $\F_q$ since $p$ is inert in $L$. 
Moreover the supersingular $\F_p$-points of  $X_{\Gamma,p}$ split completely in 
the natural projection $$\pi_\ell: X_{\ell,p} \rightarrow X_{\Gamma,p}.$$ Thus, 
the number of rational points of $X_{\ell,p}(\F_{q^2})$ verifies: 
$$N_1(X_{\ell,p}(\F_{q^2}))\geq \ell(q-1)(g+1).$$ Moreover, since $\ell$ is greater 
than the maximum order of a fixed point of $\Gamma$ on $\mathfrak{h}$, the 
projection $\pi_\ell$ is unramified and thus by Hurwitz formula, $$g_\ell=1+\ell(g-1)$$ 
where $g_\ell$ is the genus of $X_\ell$ (and also of $X_{\ell,p}$).


Note that since the field $L$ is abelian over $\Q$, there exists an integer $N$ such that the   field $L$ 
is contained in a cyclotomic extension $\Q(\zeta_N)$ where $\zeta_N$ denotes a primitive root of unity with 
minimal polynomial $\Phi_{N}$. Let us consider the reduction $\Phi_{N,\ell}$ of  $\Phi_{N}$ modulo the prime $\ell$. 
Then, the prime $\ell$ is totally split in the integer ring of $L$ if and only if   the polynomial $\Phi_{N,\ell}$ 
is totally split in ${\F_{\ell}=\Z/\ell\Z}$ i.e. if and only if $\F_{\ell}$ contains the $N$th roots of  unity which is 
equivalent to ${N\mid \ell-1}$. Hence, any prime $\ell$ such that ${\ell \equiv 1 \pmod{N}}$ is totally split in $\Q(\zeta_N)$ 
and then in $L$. Since $\ell$ runs over primes in an arithmetical progression, the ratio of two consecutive prime numbers 
${\ell \equiv 1 \pmod{N}}$ tends to one.

Then for any real number ${\epsilon >0}$, there exists an integer $k_0$ such that for any integer ${k\geq k_0}$, 
${\ell_{k+1}\leq (1+\epsilon)\ell_k}$ where $\ell_k$ and $\ell_{k+1}$ are two consecutive prime numbers congruent 
to one modulo $N$. Then there exists an integer $n_{\epsilon}$ such that  
for any integer ${n\geq n_{\epsilon}}$, the integer $k$ such that the two following inequalities hold
$$
\ell_{k+1}(q-1)(g+1)> 2n+2g_{\ell_{k+1}}-2
$$
and
$$
\ell_k(q-1)(g+1)\leq 2n+2g_{\ell_k}-2,
$$
satisfies ${k\geq k_0}$; where ${g_{\ell_i}=1+\ell_i(g-1)}$ for any integer $i$.  
Let us consider  the algebraic function field $F_{k}/\F_{q^2}$ defined over the finite field $\F_{q^2}$ 
associated to the Shimura curve $X_{\ell_{k}}$ of genus $g_{\ell_{k}}$. Let $N_i(F_{k}/\F_{q^2})$ be the number 
of places of degree~$i$ of $F_{k}/\F_{q^2}$.
Then ${N_1(F_{k+1}/\F_{q^2})\geq \ell_{k+1}(q-1)(g+1) > 2n+2g_{\ell_{k+1}}-2}$ where $g$ is the genus 
of the Shimura curve ${X_{\Gamma,p}(\F_{q^{2}})}$.
Moreover, it is known that there exists an integer $n_0$ such that for any integer ${n\geq n_0}$,  ${N_n(F_{k+1}/\F_{q^2})>0}$. 
Consequently, for any integer ${n\geq \max(n_{\epsilon},n_0)}$ this algebraic function field \linebreak[4] ${F_{k+1}/\F_{q^2}}$ 
satisfies Theorem \ref{theoprinc} and so 
$$
\mus_{q^2}(n)\leq 2n+g_{\ell_{k+1}}-1\leq 2n+\ell_{k+1}(g-1)\leq 2n+(1+\epsilon)\ell_k(g-1)
$$
with
${\ell_k< \frac{2n}{(q-1)(g+1)-2(g-1)}}$. 
Thus, for any real number ${\epsilon >0}$ and for any\linebreak[4]   ${n\geq \max(n_{\epsilon},n_0)}$, we obtain ${\mus_{q^2}(n)\leq 2n+\frac{2n(1+\epsilon)(g-1)}{(q-1)(g+1)-2(g-1)}}$ 
which gives \linebreak[4]  ${\Ms_{q^2}\leq2\left(1+\frac{1}{q-3}\right)}$. \qed
\end{proof}

\begin{proposition}\label{newbound2}
Let $q=p^m$ be a prime power with odd $m$ such that ${q \geq 5}$.
Then
$$\Ms_{q}\leq 3\left(1+\frac{2}{q-3}\right).$$
\end{proposition}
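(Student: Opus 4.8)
The plan is to re-run the proof of Proposition~\ref{newbound}, but this time to keep the modular, resp.\ Shimura, curves over their actual field of definition $\F_q$ and to let the symmetric Chudnovsky-type algorithm evaluate at places of degree~$1$ \emph{and}~$2$; this forces the use of part~2) of Theorem~\ref{theoprinc} instead of part~1), which is exactly what turns the coefficient~$2$ into a~$3$. As in Proposition~\ref{newbound}, I would split into the case $q=p$, using the curves $X_0(11\ell_k)$ (of genus $\sim\ell_k$), and the case $q=p^m$ with $m>1$, using the Shimura curves $X_{\ell_k,p}$ of genus $1+\ell_k(g-1)$, where $g$ denotes the genus of $X_{\Gamma,p}$; the only new feature being that the associated function fields $F_k$ are now considered over $\F_q$, with full constant field $\F_q$ since these curves are geometrically connected.

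The first step is the elementary remark that, for a function field $F/\F_q$ with full constant field $\F_q$, a place of degree~$1$ of $F/\F_q$ stays of degree~$1$ in the constant field extension $\overline{F}=F\F_{q^2}$, while a place of degree~$2$ of $F/\F_q$ splits into two places of degree~$1$ of $\overline{F}/\F_{q^2}$; hence $N_1(F/\F_q)+2N_2(F/\F_q)=N_1(\overline{F}/\F_{q^2})$. Consequently the lower bounds on the number of $\F_{q^2}$-rational points already proved in Proposition~\ref{newbound} --- $N_1(\overline{F_k}/\F_{q^2})\geq(q-1)(g_k+1)$ in the prime case and $N_1(\overline{F_k}/\F_{q^2})\geq\ell_k(q-1)(g+1)$ when $m>1$ --- transfer at once into lower bounds for $N_1(F_k/\F_q)+2N_2(F_k/\F_q)$, which is precisely the quantity occurring in part~2) of Theorem~\ref{theoprinc}.

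Next I would check, for a prime $\ell_k$ chosen as a function of $n$, the hypotheses of part~2) of Theorem~\ref{theoprinc} for $F_k/\F_q$. Since the genus of $F_k$ is of order $n$ as soon as $\ell_k$ is, the condition $2g+1\leq q^{\frac{n-1}{2}}(q^{\frac12}-1)$ (with $g$ the genus of $F_k$) holds for all $n$ large enough, and by \cite[Corollary~5.2.10~(c)]{stic} it simultaneously furnishes the place of degree~$n$ used by the algorithm. The one genuinely new ingredient is the existence of a \emph{non-special divisor of degree one less than the genus} on $F_k/\F_q$, and this is exactly where the hypothesis $q\geq5$ enters: by the result of Ballet and Le~Brigand on non-special divisors, every function field over $\F_q$ with $q\geq4$ carries a non-special divisor of that degree; this is also why the even-power case of Proposition~\ref{newbound}, handled over $\F_{q^2}$ by part~1), produces the sharper coefficient~$2$. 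With these hypotheses in hand, I would proceed as in Proposition~\ref{newbound}: choose two consecutive admissible primes $\ell_k<\ell_{k+1}$ --- consecutive primes, using the gap estimate $\ell_{k+1}-\ell_k\leq\ell_k^{0,525}$, in the prime case, and consecutive primes $\equiv1\pmod N$ when $m>1$ --- straddling the threshold at which the guaranteed lower bound on $N_1+2N_2$ first exceeds $2n+2g-2$ ($g$ being the genus of $F_{k+1}$), so that $\ell_{k+1}\leq(1+\epsilon)\ell_k$ for $n$ large, while $\ell_k$ is at most $\frac{2n}{(q-1)(g+1)-2(g-1)}$ when $m>1$ (here $g$ stands for the genus of $X_{\Gamma,p}$) and $\sim\frac{2n}{q-3}$ in the prime case.

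Part~2) of Theorem~\ref{theoprinc} then gives $\mus_q(n)\leq 3n+3g_{k+1}$, where $g_{k+1}$ denotes the genus of $F_{k+1}$; substituting $g_{k+1}=1+\ell_{k+1}(g-1)$ together with the bound on $\ell_{k+1}$ above (case $m>1$), dividing by $n$, passing to the $\limsup$ and letting $\epsilon\to0$ yields
$$
\Ms_q\;\leq\;3+\frac{6(g-1)}{(q-1)(g+1)-2(g-1)}\;=\;3+\frac{6(g-1)}{g(q-3)+q+1}\;\leq\;3+\frac{6}{q-3}\;=\;3\left(1+\frac{2}{q-3}\right),
$$
the last inequality being the trivial $g(q-3)+q+1\geq(g-1)(q-3)$ (one may instead let $g\to\infty$ along the family of curves $X_{\Gamma,p}$, with the same conclusion), and the prime case is identical with $g_k\sim\ell_k$ in the role of $1+\ell_k(g-1)$, leading to the same bound. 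I expect the main --- essentially the only --- new point compared with Proposition~\ref{newbound} to be securing the non-special divisor over $\F_q$; everything else is a transcription of that proof, with part~2) of Theorem~\ref{theoprinc} in place of part~1) and $N_1+2N_2$ in place of $N_1$.
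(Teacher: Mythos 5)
Your proposal is correct and follows essentially the same route as the paper: keep the curves of Proposition~\ref{newbound} over their field of definition $\F_q$, use the constant-field-extension identity $N_1(F/\F_q)+2N_2(F/\F_q)=N_1(F\F_{q^2}/\F_{q^2})$ to transfer the point counts, and apply part~2) of Theorem~\ref{theoprinc} with the same choice of consecutive primes $\ell_k$, leading to the identical asymptotic computation. The only difference is that you make explicit the verification of the non-special divisor of degree $g-1$ via Ballet--Le~Brigand, a hypothesis the paper uses silently when invoking the bound $3n+3g$.
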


\begin{proof}
It is sufficient to consider the same families of curves than in Proposition~\ref{newbound}. 
These families of curves $\{X_k\}$ are defined over the residue class field of $p$ which  is isomorphic to $\F_q$.
Hence, we can consider the associated algebraic function fields ${F_k/\F_q}$ defined over $\F_q$. 
If ${q=p}$, we have  $N_1(F_{k+1}/\F_{p^2})=N_1(F_{k+1}/\F_{p})+2N_2(F_{k+1}/\F_{p})\geq (p-1)(\ell_{k+1}+1)> 2n+2\ell_{k+1}-2$
since ${F_{k+1}/\F_{p^2}=F_{k+1}/\F_{p}\otimes_{\F_p} \F_{p^2}}$.  
Then, for any real number ${\epsilon >0}$ and for any integer $n$ sufficiently large, we have
${\mus_{p}(n)\leq 3n+3g_{\ell_{k+1}}\leq 3n+3(1+\epsilon)\ell_k}$ by Theorem \ref{theoprinc} 
since ${N_n(F_{k+1}/\F_{q^2})>0}$. Then, by using the condition ${\ell_k\leq \frac{2n}{p-3}-\frac{p+1}{p-3}}$, 
we obtain ${\Ms_{p}\leq 3\left(1+\frac{2}{p-3}\right)}$.
If ${q=p^m}$ with odd $m$, we have 
$N_1(F_{k+1}/\F_{q^2})=N_1(F_{k+1}/\F_{q})+2N_2(F_{k+1}/\F_{q})\geq \ell_{k+1}(q-1)(g+1)> 2n+2g_{\ell_{k+1}}-2$
since ${F_{k+1}/\F_{q^2}=F_{k+1}/\F_{q}\otimes_{\F_q} \F_{q^2}}$.
Then, for any real number ${\epsilon >0}$ and for any integer $n$ sufficiently large as in Proof of Proposition~\ref{newbound}, we have
\linebreak[4]${\mus_{q}(n)\leq 3n+3g_{\ell_{k+1}}\leq 3n+3(1+\epsilon)\ell_k(g-1)}$ by Theorem~\ref{theoprinc} 
since \linebreak[4] ${N_n(F_{k+1}/\F_{q^2})>0}$. 
Then, by using the condition ${\ell_k< \frac{2n}{(q-1)(g+1)-2(g-1)}}$
we obtain ${\Ms_{q}\leq 3\left(1+\frac{2}{q-3}\right)}$. \qed
\end{proof}

\begin{remark}
Note that in \cite[Lemma IV.4]{cacrxiya}, Elkies gives another  construction of a family $\{\chi_s\}^{\infty}_{s=1}$
of Shimura curves over $\F_q$ satisfying 
for any prime power $q$ and for any integer $t\geq 1$ the following conditions:
\begin{enumerate}[(i)]
   \item the genus $g(F_s)$ tends to $+\infty$ as $s$ tends to $+\infty$, where $F_s$ stands for the function field $\F_q(\chi_s)$,
   \item $\lim_{s \rightarrow +\infty}g(F_s)/g(F_{s-1}) = 1$,
   \item $\lim_{s \rightarrow +\infty}B_{2t}(F_s)/g(F_s) = (q^t-1)/(2t)$, where $B_{2t}(F_s)$ stands for the number of places of degree $2t$ in $F_s$. 
\end{enumerate}

However,  this construction is not sufficiently explicit to enable Cascudo and al. \cite{cacrxing2} (and \cite{cacrxiya}) to derive the best bounds in all the cases 
(cf.  Section  \ref{comparison}). Indeed, let us recall the construction of Elkies.

Let $q=p^r$ be a prime power and put $f=rt$.
Let $K$ be a totally real number field such that $K/\Q$ is a Galois extension of degree $f$ and $p$ is totally inert in $K$.
Let $B$ be a quaternion algebra over $K$ such that the set $\mathsf{S}$ of non-archimedean primes of $K$ that are ramified in $B$ is Galois invariant.
Note that $B$ can be constructed by taking $\mathsf{S}$ to be either the empty set for odd $f$,
or the set of primes lying over $p$ for even $f$ (see \cite{shtsvl}).

Let ${\ell \neq p}$ be a rational prime outside $\mathsf{S}$ such that $\ell$ is totally inert in $K$
(note that in \cite{shtsvl}, $\ell$ is chosen such that it is completely splitting).
Consider the Shimura curve ${X_0^B(\ell):=\Gamma_0(\ell\mathcal{O}_K)\backslash\mathfrak{h}}$,
where $\mathfrak{h}$ is the upper half-plane and ${\Gamma_0(\ell\mathcal{O}_K)}$ is the subgroup of the unit group of the maximal order of $B$
mapping to upper triangle matrices modulo ${\ell\mathcal{O}_K}$.
Then ${X_0^B(\ell)}$ is defined over the rational field $\Q$ and has a good reduction modulo $p$.
Thus, the reduction of ${X_0^B(\ell)}$ is defined over $\F_p$, and therefore over $\F_q$ as well.
This curve has at least ${(p^f-1)g_\ell}$ supersingular points over ${\F_{p^{2f}}=\F_{q^{2t}}}$,
where $g_\ell$ is the genus of ${X_0^B(\ell)}$.
One knows that the ratio ${g_\ell/\ell^f}$ tends  to a fixed number $a$ when $\ell$ tends to ${+\infty}$.
Now let ${\{\ell_s\}_{s=1}^{+\infty}}$ be the set of consecutive primes such that $\ell_s$ are totally inert in $K$ and ${\ell_s \notin \mathsf{S}}$.
By Chebotarev's density theorem,
we have ${\ell_s/\ell_{s-1} \rightarrow 1}$ as $s$ tends to $+\infty$.
Hence, ${g_{\ell_s}/g_{\ell_{s-1}} \rightarrow 1}$ as $s$ tends to $+\infty$.

For the family of function fields $\{F_s/\F_q\}$ of the above Shimura curves,
the number $N_{2t}(F_s)$ of $\F_{q^{2t}}$-rational places of $F_s$ satisfies
$$
\lim_{g(F_s) \rightarrow +\infty}\frac{N_{2t}(F_s)}{g(F_s)}=p^f-1=q^t-1.
$$
Moreover, (i) and (ii) are satisfied as well.

By the identity ${N_{2t}(F_s)=\sum_{i|2t}iB_i(F_s)}$, we get
\begin{eqnarray*}
\liminf_{g(F_s) \rightarrow +\infty}\frac{1}{g(F_s)}\sum_{i=1}^{2t}\frac{iB_i(F_s)}{q^t-1} & \geq
		& \liminf_{g(F_s) \rightarrow +\infty}\frac{1}{g(F_s)}\sum_{i|2t}\frac{iB_i(F_s)}{q^t-1}\\
		& = & \liminf_{g(F_s)\rightarrow +\infty}\frac{N_{2t}(F_s)}{g(F_s)(q^t-1)}=1.
\end{eqnarray*}
Thus, the inequality $${\liminf_{g(F_s) \rightarrow +\infty} \frac{1}{g(F_s)}\sum_{i=1}^{2t}\frac{iB_i(F_s)}{q^{t}-1} \geq 1}$$
is satisfied and consequently (iii) is also satisfied by \cite[Lemma IV.3]{cacrxiya}.
\end{remark}

\section{Comparison with the current best asymptotical bounds}\label{comparison}

In this section, we recall the results obtained in \cite[Theorem IV.6 and IV.7]{cacrxiya} and 
\cite[Theorem 5.18]{cacrxing2}  which are known to give the best current estimates for $\Ms_q$, 
and compare these bounds to those established in  Propositions \ref{newbound} and~\ref{newbound2}.

\subsection{Comparison with the bounds in  \cite{cacrxiya}} 
In \cite{cacrxiya}, the authors establish the following results:
\begin{theorem}\label{MQ1}
For any prime power $q\geq2$, one has
	\begin{equation}\label{xingq}
   	\Ms_q \leq \mus_q(2t) \frac{q^t-1}{t(q^t-5)}
  	 \end{equation}
  for any $t \geq 1$ as long as $q^t-5 > 0$, and 
	\begin{equation}\label{xingq2}
	\Ms_{q^2} \leq \mus_{q^2}(t) \frac{2(q^{t}-1)}{t(q^{t}-5)}
	\end{equation}
   for any $t \geq 1$ as long as ${q^{t}-5 > 0}$.
\end{theorem}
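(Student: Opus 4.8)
The plan is to follow the strategy of \cite{cacrxiya}: feed the family of Shimura curves of Elkies recalled in the previous Remark into the generalized symmetric Chudnovsky--Cenk--\"Ozbudak algorithm, performing the interpolation at places of degree $2t$ (resp.\ of degree $t$ over $\F_{q^2}$ for the second inequality), which is a routine extension of Theorem~\ref{theoprinc} to evaluation points of higher degree. Concretely, for the first bound fix $t\geq 1$ with $q^t>5$, let $\{F_s/\F_q\}_s$ be the function fields of that family, write $g_s=g(F_s)$ and $N_s=B_{2t}(F_s)$, so that $g_s\to+\infty$, $g_s/g_{s-1}\to 1$ and $N_s/g_s\to (q^t-1)/(2t)$ by (i)--(iii). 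Given a target degree $n$ and $s$ large, I would choose a place $Q$ of $F_s$ of degree $n$ --- it exists by \cite[Cor.~5.2.10(c)]{stic} as soon as $2g_s+1\leq q^{(n-1)/2}(q^{1/2}-1)$, which I shall check holds throughout the relevant range --- together with a divisor $\mathcal{D}$ of degree $n+2g_s-1$ whose support avoids $Q$ and all places of degree $2t$. Since $\deg(\mathcal{D}-Q)=2g_s-1\geq 2g_s-1$, the divisor $\mathcal{D}-Q$ is non-special, so $\mathcal{L}(\mathcal{D})\to F_Q\cong\F_{q^n}$ is surjective; and since $\deg(2\mathcal{D})=2n+4g_s-2$, the evaluation $\mathcal{L}(2\mathcal{D})\to\prod_{i=1}^{N_s}F_{P_i}$ at the $N_s$ places $P_i$ of degree $2t$ is injective as soon as $2t\,N_s>2n+4g_s-2$, because then $\mathcal{L}\!\left(2\mathcal{D}-\sum_i P_i\right)=\{0\}$. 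Running the symmetric Chudnovsky-type algorithm, each evaluation at a place of degree $2t$ being handled by an optimal symmetric algorithm for $\F_{q^{2t}}$ over $\F_q$ of bilinear cost $\mus_q(2t)$, then yields $\mus_q(n)\leq \mus_q(2t)\,N_s$.

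To pass to the asymptotic statement I would take $n_s=t\,N_s-2g_s$, the largest admissible $n$. By (i) and (iii), $n_s\to+\infty$ and $n_s/g_s\to (q^t-1)/2-2=(q^t-5)/2>0$ (this is exactly where the hypothesis $q^t-5>0$ enters), so the condition ensuring a place of degree $n_s$ holds for $s$ large; and by (ii), $n_s/n_{s-1}\to 1$, so every sufficiently large integer $n$ lies in some interval $(n_{s-1},n_s]$, to which the construction on $F_s$ applies. Hence for $n$ large, $\mus_q(n)/n\leq \mus_q(2t)\,N_s/n\leq \mus_q(2t)\,N_s/n_{s-1}$, and since $N_s/n_{s-1}=(N_s/N_{s-1})(N_{s-1}/n_{s-1})\to 1\cdot\frac{(q^t-1)/(2t)}{(q^t-5)/2}=\frac{q^t-1}{t(q^t-5)}$, passing to $\limsup_n$ gives $\Ms_q\leq \mus_q(2t)\,\dfrac{q^t-1}{t(q^t-5)}$.

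For the second inequality I would use the same curves viewed over $\F_{q^2}$ --- legitimate since they are defined over $\F_p\subseteq\F_{q^2}$ --- and observe that each place of degree $2t$ of $F_s/\F_q$ splits into two places of degree $t$ of $F_s\otimes_{\F_q}\F_{q^2}$, so that $B_t(F_s/\F_{q^2})\geq 2N_s$ and therefore $\liminf_s B_t(F_s/\F_{q^2})/g_s\geq (q^t-1)/t$. The very same construction over $\F_{q^2}$, now with evaluations at places of degree $t$ (each of cost $\mus_{q^2}(t)$), gives $\mus_{q^2}(n)\leq \mus_{q^2}(t)\,B_t(F_s/\F_{q^2})$ whenever $t\,B_t(F_s/\F_{q^2})>2n+4g_s-2$; taking $n_s=\big\lfloor\tfrac12 t\,B_t(F_s/\F_{q^2})\big\rfloor-2g_s$ and letting $s\to\infty$ exactly as above yields $\Ms_{q^2}\leq \mus_{q^2}(t)\,\dfrac{2(q^t-1)}{t(q^t-5)}$.

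The limiting computation is routine; the point requiring care is the exact form of the Chudnovsky-type estimate one invokes, namely that $\mathcal{D}$ may be taken of degree precisely $n+2g_s-1$ with the required non-speciality and disjointness --- it is this ``$2g_s$'' overhead, doubled through $2\mathcal{D}$ into a ``$4g_s$'', that produces the constant $q^t-5$ (and not some larger $q^t-c$) --- together with the guarantee that a place of degree exactly $n$ is available on $F_s$ for every $n\in(n_{s-1},n_s]$. Both hold here only because $n_s$ grows linearly in $g_s$ with positive slope $(q^t-5)/2$, so they are the steps deserving attention. One must finally keep in mind, as the Remark emphasizes, that properties (i)--(iii) of Elkies' family are not sufficiently explicit to reach the optimal constant in every case, which is precisely the gap that the more explicit Shimura-modular-curve construction of the present paper is designed to fill.
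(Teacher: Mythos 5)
Note first that the paper contains no proof of this statement: Theorem \ref{MQ1} is quoted from \cite[Theorems IV.6 and IV.7]{cacrxiya} in the comparison section, so there is no internal argument to measure your attempt against. Your reconstruction follows the same route as that reference --- the symmetric Chudnovsky-type algorithm with evaluations at places of degree $2t$ (resp.\ of degree $t$ over $\F_{q^2}$) applied to Elkies' family recalled in the Remark --- and the bookkeeping is right: taking $\deg\mathcal{D}=n+2g_s-1$ makes $\mathcal{D}-Q$ automatically non-special, the injectivity count $2tN_s>2n+4g_s-2$ gives $n_s/g_s\to(q^t-1)/2-2=(q^t-5)/2$, and the limit computation, combined with $N_s/N_{s-1}\to1$ from (ii)--(iii), yields exactly the stated constants. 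The one loose end is in the second bound: arguing ``exactly as above'' with $n_s$ defined from $B_t(F_s/\F_{q^2})$ requires the ratio of numbers of evaluation places on consecutive curves to tend to $1$, which properties (i)--(iii) guarantee for $B_{2t}(F_s/\F_q)$ but not directly for $B_t(F_s/\F_{q^2})$ when $t$ is odd, since degree-$t$ places of $F_s/\F_q$ also contribute and nothing controls their number; this is repaired either by evaluating only at the $2B_{2t}(F_s/\F_q)$ degree-$t$ places obtained by splitting the degree-$2t$ places in the constant field extension (which is all your limit computation actually uses), or by using on each curve only as many places as the injectivity condition demands.
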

Let us show that our results are better than those of this theorem except for some small values of $q$.
	\subsubsection{Bounds over $\F_q$.}
The estimates obtained in \cite[Corollary IV.8.]{cacrxiya} show that (\ref{xingq}) gives better bounds than Proposition \ref{newbound2} for any ${q\leq13}$. Indeed, one has:
$$
\begin{array}{|c|c|c|c|c|c|c|}
	\hline
	q & 5 & 7 & 8 & 9 & 11 & 13 \\
	 \hline
 	 \Ms_q \mbox{ \cite[Cor. IV.8]{cacrxiya}} & 4.8 & 3.82 & 3.74 & 3.68 & 3.62 & 3.59 \\
	\hline
	 \Ms_q \mbox{ [Prop. \ref{newbound2}]} & 6 & 4.5 & 4.2 & 4 & 3.75 & 3.6 \\
	\hline
	\end{array}
$$

However, as soon as ${q\geq15}$, our estimate is sharper than (\ref{xingq}). Indeed, for ${q\geq15}$, Proposition \ref{newbound2} gives:
\begin{equation*}\label{Mq3.5}
\Ms_q \leq 3.5.
\end{equation*}
On the other hand, since ${\mus_q(2t)\geq4t-1}$, the best estimate that can be obtained with Bound (\ref{xingq}) is:
\begin{equation}\label{xingqdec}
\Ms_{q} \leq (4t-1)\cdot  \frac{q^t-1}{t(q^t-5)} = \left(4-\frac{1}{t}\right) \cdot \left(1+\frac{4}{q^t-5}\right).  
\end{equation}
Thus  one must have ${4-\frac{1}{t}<3.5}$ to obtain a better estimate than 3.5, which requires ${t=1}$. In this case, (\ref{xingqdec}) becomes:
$$
\Ms_q\leq 3\left(1+\frac{4}{q-5}\right)
$$
which is less precise than the bound of Proposition \ref{newbound2} for any $q\geq15$.

\subsubsection{Bounds over $\F_{q^2}$.}

For ${q= 4}$, Proposition \ref{newbound} gives
${\Ms_{q^2} \leq 4}$, which is less sharp than Bound (\ref{xingq2}) applied with ${t=4}$, which leads to ${\Ms_{q^2} \leq 3.56}$.

However, for any ${q\geq 5}$, Proposition \ref{newbound} gives better bounds than (\ref{xingq2}). Indeed, since ${\mus_{q^2}(t) \geq 2t-1}$, the best estimate that can be obtained with (\ref{xingq2}) is:
\begin{equation}\label{xingq2dec}
\Ms_{q^2} \leq 2(2t-1) \cdot \frac{q^t-1}{t(q^t-5)} = \left(4-\frac{2}{t}\right) \cdot \left(1+ \frac{4}{q^t-5}\right).
\end{equation}

Since Proposition \ref{newbound} gives ${\Ms_{q^2} \leq 3}$ for any  ${q\geq5}$, it is necessary to have  ${4-\frac{2}{t}<3}$ to obtain a better bound with (\ref{xingq2dec}), which requires ${t=1}$. This is impossible for ${q=5}$ since Bound (\ref{xingq2}) is undefined in this case, and for ${q>5}$ and ${t=1}$, (\ref{xingq2dec}) becomes:
$$
\Ms_{q^2} \leq 2\left( 1+ \frac{4}{q-5}\right)
$$
which is less sharp than the bound obtained from Proposition \ref{newbound}.

\subsection{Comparison with the bounds in  \cite{cacrxing2}}

In \cite{cacrxing2} (which is an extended version of \cite{cacrxing}), 
the authors establish the following asymptotic bounds:
\begin{theorem}\label{MQ2}
For a prime power $q$, one has
$$\Ms_q \leq
\left\{
\begin{array}{ll}
\mus_q(2t)\frac{q^t-1}{t(q^t-2-\log_q2)}, & \mbox{if } 2 | q \\
\mus_q(2t)\frac{q^t-1}{t(q^t-2-2\log_q2)}, & \mbox{otherwise}
\end{array}
\right
.$$
for any ${t \geq 1}$ as long as ${q^t-2-\log_q2 > 0}$ for even $q$;
and \linebreak[4]${q^t-2-2\log_q2 > 0}$ for odd $q$.
\end{theorem}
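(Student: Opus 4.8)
\emph{Proof idea.} The plan is to combine the generalized symmetric Chudnovsky--Chudnovsky algorithm (in the form developed by Arnaud, by Cenk--\"Ozbudak, and by Randriambololona, which permits evaluations at places of arbitrary degree) with the Elkies family $\{F_s/\F_q\}$ of Shimura curves recalled in the Remark above---satisfying $g(F_s)\to+\infty$, $g(F_s)/g(F_{s-1})\to1$ and $B_{2t}(F_s)/g(F_s)\to(q^t-1)/(2t)$---together with the torsion-limit and Riemann--Roch-systems machinery needed to shrink the genus correction term as far as possible. Fix $t\ge1$ so that the stated denominator is positive. For a large integer $n$ one selects $s=s(n)$ so that $F_s$ carries just enough places of degree $2t$ to run the algorithm for multiplication in $\F_{q^n}$; the ratio condition $g(F_s)/g(F_{s-1})\to1$ (a Chebotarev statement on the totally inert primes $\ell$ used in the construction) guarantees that no gap in the genus sequence spoils the $\limsup$ defining $\Ms_q$.

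First I would fix the algorithmic skeleton. One looks for a place $Q$ of degree $n$ of $F_s$ (which exists once $g(F_s)$ is large enough relative to $n$, by the usual Weil-type estimate), places $P_1,\dots,P_N$ of degree $2t$ with $N\le B_{2t}(F_s)$, and a divisor $D$ supported away from $Q$ and the $P_i$ such that
\begin{enumerate}[(a)]
 \item the evaluation at $Q$, $\ \mathcal{L}(D)\to\F_{q^n}$, is surjective, and
 \item the evaluation $\mathcal{L}(2D)\to\prod_{i=1}^{N}\F_{q^{2t}}$ is injective, i.e. $\mathcal{L}(2D-P_1-\cdots-P_N)=\{0\}$.
\end{enumerate}
Given such data, multiplication in $\F_{q^n}$ factors through the componentwise product on $\prod_i\F_{q^{2t}}$ (using one coefficient of the local expansion at each $P_i$), the linear maps being free; each degree-$2t$ component costs a symmetric bilinear multiplication in $\F_{q^{2t}}$ over $\F_q$, that is $\mus_q(2t)$. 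Hence $\mus_q(n)\le\mus_q(2t)\cdot N$.

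Next comes the parameter optimisation. Condition (a) forces $\deg D$ to be $n$ plus a term linear in $g(F_s)$ (one takes $D$ with $D-Q$ a well-chosen divisor of degree about $g(F_s)$), while (b) caps $2\deg D$ below $2tN=\deg(P_1+\cdots+P_N)$ up to an allowed slack; balancing these yields $2tN\gtrsim 2n+C\,g(F_s)$ for a correction constant $C$. Substituting $B_{2t}(F_s)\sim\frac{q^t-1}{2t}\,g(F_s)$ and $2tN\sim(q^t-1)g(F_s)$ gives $n\sim\frac12 g(F_s)\bigl(q^t-1-C\bigr)$, whence
$$
\frac{\mus_q(n)}{n}\ \le\ \mus_q(2t)\,\frac{N}{n}\ \longrightarrow\ \mus_q(2t)\,\frac{q^t-1}{t\,(q^t-1-C)}.
$$
To match the statement one needs $C=1+\log_q2$ when $q$ is even and $C=1+2\log_q2$ when $q$ is odd.

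The delicate point---and the main obstacle---is producing a divisor $D$ with exactly that value of $C$. One cannot obtain (b) from a degree count alone, because $\deg(2D-\sum P_i)$ is allowed to be a small positive multiple of $g(F_s)$; instead one argues that among the $\approx q^{g(F_s)}$ classes available for $D$ some choice keeps $2D-\sum P_i$ non-effective while (a) stays valid. This is precisely a Riemann--Roch system in the sense of Cascudo--Cramer--Xing: the classes forbidden by (b) number at most (the number of effective classes of degree $\deg(2D-\sum P_i)$) times the size of a fibre of multiplication-by-$2$ on the Jacobian, i.e. times $|\mathrm{Jac}(F_s)[2](\overline{\F_q})|$. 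When $q$ is odd this $2$-torsion group has $2^{2g(F_s)}=q^{2g(F_s)\log_q2}$ elements, whereas in characteristic $2$ the $2$-rank of the Jacobian is at most $g(F_s)$, so the $2$-torsion has at most $2^{g(F_s)}=q^{g(F_s)\log_q2}$ elements---this asymmetry is the whole reason the even-$q$ bound is sharper. Using the torsion-limit estimates available for the Elkies family, one checks that $q^{g(F_s)}$ dominates the number of forbidden classes as soon as $\deg D$ exceeds $n$ by the stated amount, which furnishes a suitable $D$ for all large $n$ and completes the argument.
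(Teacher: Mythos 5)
This statement is not proved in the paper at all: Theorem~\ref{MQ2} is quoted verbatim from Cascudo--Cramer--Xing \cite[Theorem 5.18]{cacrxing2} solely to be compared with Propositions~\ref{newbound} and~\ref{newbound2}, so there is no internal proof to measure your attempt against. Judged against the argument of the cited reference, your sketch does follow the same route: a symmetric Chudnovsky-type algorithm evaluating at $N$ places of degree $2t$ on the Elkies family (so that $2tN\sim(q^t-1)g(F_s)$ and the genus-ratio condition prevents gaps in the $\limsup$), combined with the Riemann--Roch-systems/torsion-limit device to replace the crude degree count for the injectivity condition on $2D$ by a class-counting argument whose extra cost is governed by $\left|\mathrm{Jac}(F_s)[2]\right|$, namely at most $2^{2g}=q^{2g\log_q2}$ in odd characteristic and at most $2^{g}=q^{g\log_q2}$ in characteristic $2$; this asymmetry is indeed exactly what produces the two denominators $q^t-2-2\log_q2$ and $q^t-2-\log_q2$.

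Two caveats. First, the heart of the proof is the counting step you only gesture at: one must solve the two Riemann--Roch conditions \emph{simultaneously}, showing that the number of classes violating surjectivity at $Q$ (controlled by the number of effective classes of degree $2g-2+n-\deg D$) plus $\left|\mathrm{Jac}(F_s)[2]\right|$ times the number of effective classes of degree $\deg\bigl(2D-\sum_i P_i\bigr)$ is strictly smaller than the class number $h$; your ``$\approx q^{g}$ available classes'' is the right order of magnitude, but the precise constants $1+\log_q2$ and $1+2\log_q2$ come out of the explicit estimates on $h$ and on the number of effective divisor classes in \cite{cacrxing2}, which your sketch defers to that machinery rather than reproves. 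Also, no family-specific ``torsion-limit estimates for the Elkies family'' are needed: the generic $2$-torsion bounds above suffice, and the family is only used through properties (i)--(iii) of the Remark. Second, a small slip: the existence of a place $Q$ of degree $n$ requires $2g+1\le q^{(n-1)/2}(q^{1/2}-1)$, i.e.\ $n$ large compared to $\log_q g$, not ``$g(F_s)$ large enough relative to $n$'' as you wrote; in your regime $n\asymp g(F_s)$ this is harmless asymptotically, but the condition should be stated the right way around.
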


This bound always beats the one of Proposition \ref{newbound2} for arbitrary $q$ (for example, by setting ${t=1}$ and ${\mus_q(2t)=4t-1}$). Nevertheless, if we focus on the case of $\Ms_{q^2}$, then the bound of Proposition \ref{newbound} is better as soon as ${q>5}$ since in this case, it gives:
$$
\Ms_{q^2} <3
$$
which can not be reached with the bound of Theorem \ref{xing2}, since the best that one can get is:
\begin{equation*}\label{xing2}
\Ms_{q} \leq
\left\{
\begin{array}{ll}
\left(4-\frac{1}{t}\right)\left(1+\frac{1+ \log_q 2}{q^t-2-\log_q2}\right), & \mbox{if } 2 | q \\
\left(4-\frac{1}{t}\right)\left(1+\frac{1+2\log_q 2}{q^t-2-2\log_q2}\right), & \mbox{otherwise}
\end{array}
\right.
\end{equation*}
which obviously  can not be $<3$.

\end{document}